\documentclass[12pt,leqno]{amsart}

\usepackage{latexsym,amsthm,amsmath,amssymb,mathrsfs}
\usepackage{graphicx}
\usepackage{color}

\usepackage{setspace}
%\doublespacing

\title[Homeomorphism with negative Jacobian]{A measure and orientation preserving homeomorphism with approximate Jacobian equal $-1$ almost everywhere}
\author{Pawe\l{} Goldstein}
\address{Pawe\l{} Goldstein\newline \indent Institute of Mathematics \newline \indent Faculty of Mathematics, Informatics and Mechanics\newline \indent University of Warsaw
\newline \indent Banacha 2\newline \indent 02-097 Warsaw, Poland\newline \indent {\tt goldie@mimuw.edu.pl}}
\author{Piotr Haj\l{}asz}
\address{Piotr Haj{\l}asz\newline \indent Department of Mathematics\newline \indent University of Pittsburgh\newline \indent 301
  Thackeray Hall\newline \indent Pittsburgh, PA 15260, USA\newline \indent {\tt hajlasz@pitt.edu}}
\thanks{P.G. was supported by FNP grant POMOST BIS/2012-6/3 \newline P.H.\ was supported by NSF grant DMS-1500647.}

\subjclass[2010]{Primary 46E35; Secondary 28D05, 37A05, 26B10}
\keywords{approximately differentiable homeomorphisms, measure preserving homeomorphisms,
approximation, geometric measure theory}

\setlength{\oddsidemargin}{1pt}
\setlength{\evensidemargin}{1pt}
\setlength{\topmargin}{1pt}       % gains 26pt height
\setlength{\textheight}{650pt}    % 528 + 26 + 11 + 24 + 6 + 55 for luck
\setlength{\textwidth}{460pt}     % 360 + 53 + 47 for luck

\belowdisplayskip=18pt plus 6pt minus 12pt \abovedisplayskip=18pt
plus 6pt minus 12pt
\parskip 8pt plus 1pt

%%%%This makes a double spacing. Use this with 11pt style. If you
%%%%want to use this just insert \dsp after the \begin{document}
%%%%The correct baselinestretch for double spacing is 1.37. However
%%%%you can use different parameter.

%%%%%%LITERY%%%%%%%
\def\eps{\varepsilon}

\def\id{{\rm id\, }}
  % metric balls

\def\C{{\mathcal C}}

%%%%%%%%%%%TWIERDZENIA%%%%%%%%%%%%
\newtheorem{theorem}{Theorem}
\newtheorem{lemma}[theorem]{Lemma}
\newtheorem{corollary}[theorem]{Corollary}
\newtheorem{proposition}[theorem]{Proposition}

%%%FUNKCJE MATEATYCZNE%%%%%%%%%

\def\diam{{\rm diam\,}}
\def\dist{{\rm dist\,}}

%%%%%%%%%%%%%%%%%%%%%%%%%%%%%%%%%%%%%%%%%%%%%%%%%%%%

\theoremstyle{definition}
\newtheorem{remark}[theorem]{Remark}
\newtheorem{definition}[theorem]{Definition}

%%%%%%%%%%%%%%CALKI%%%%%%%%%%%%%%%%%%%%%%%%%%%%%%%%%
\newcommand{\barint}{
\rule[.036in]{.12in}{.009in}\kern-.16in \displaystyle\int }

\newcommand{\barcal}{\mbox{$ \rule[.036in]{.11in}{.007in}\kern-.128in\int $}}
%%%%%%%%%%%%%%%%%%%%%%%%%%%%%%%%%%%%%%%%%%%%%%%%%%%%

\newcommand{\bbbn}{\mathbb N}
\newcommand{\bbbr}{\mathbb R}

\def\ap{\operatorname{ap}}

\def\diam{\operatorname{diam}}

\def\dist{\operatorname{dist}}

%%%%%%%%%%%%%%%%%%%%%%%%%%%%%%%%%%%%%%%%%%%%%%%%%%%

\def\mvint_#1{\mathchoice
          {\mathop{\vrule width 6pt height 3 pt depth -2.5pt
                  \kern -8pt \intop}\nolimits_{\kern -3pt #1}}%
%%%% P.S., 01/03/2001
% old definition had ...\nolimits_{#1}}
% \kern -3pt makes nicer distances between the integral sign
% and the domain of integration
%%%%
          {\mathop{\vrule width 5pt height 3 pt depth -2.6pt
                  \kern -6pt \intop}\nolimits_{#1}}%
          {\mathop{\vrule width 5pt height 3 pt depth -2.6pt
                  \kern -6pt \intop}\nolimits_{#1}}%
          {\mathop{\vrule width 5pt height 3 pt depth -2.6pt
                  \kern -6pt \intop}\nolimits_{#1}}}

%%%%%%%%%%%%%%%%%%%%%%%%%%%%%%%%%%%%%%%%%%%%%%%%%%%%%

\numberwithin{theorem}{section} \numberwithin{equation}{section}

\begin{document}

\begin{abstract}
We construct an almost everywhere approximately
differentiable, orientation and measure preserving homeomorphism of a unit
$n$-dimensional cube onto itself, whose Jacobian is equal
to $-1$ a.e. Moreover we prove that our homeomorphism can be uniformly approximated by
orientation and measure preserving diffeomorphisms.
\end{abstract}

\maketitle

\section{Introduction}

The classical change of variables formula states that if $\Phi:\bbbr^n\supset\Omega\to\bbbr^n$ is a diffeomorphism, then
\begin{equation}
\label{E1}
\int_\Omega g(x)|J_\Phi(x)|\, dx = \int_{\Phi(\Omega)} g(\Phi^{-1}(y))\, dy,
\end{equation}
where $J_\Phi(x)=\det D\Phi(x)$. It is natural to ask how far we can relax the regularity assumptions for $\Phi$ so that the change of variables formula \eqref{E1} remains
valid. A complete answer to this question was provided by Federer \cite{Federer44}, in 1944.

We say that a mapping $\Phi:\Omega\to\bbbr^n$ defined on an open set $\Omega\subset\bbbr^n$ has the {\em Lusin property (N)} if it maps sets of Lebesgue measure zero to sets of Lebesgue
measure zero. Clearly, a mapping $\Phi$ for which \eqref{E1} is true must satisfy the condition (N). Indeed, if $|E|=0$ but $|\Phi(E)|>0$, then taking $g$
to be the characteristic function of $E$, $g=\chi_E$, we obtain zero on the left hand side of \eqref{E1}, but a positive value on the right hand side, which is a contradiction.
Also, in order to define the Jacobian, $\Phi$ must be differentiable, at least in some weak sense.

Recall that for a measurable set $E\subset\bbbr^n$, $x\in\bbbr^n$ is called a \emph{density point} of $E$, if $\lim_{\rho\to 0} |E\cap B(x,\rho)|/|B(x,\rho)|=1$. By the Lebesgue differentiation theorem, a.e. point of $E$ is its density point.
A measurable function $f:E\to\bbbr$ defined on a measurable set $E\subset\bbbr^n$ is said to be \emph{approximately differentiable} at $x\in E$ if there is a measurable set $E_x\subset E$
and a linear function $L:\bbbr^n\to\bbbr$ such that $x$ is a density point of $E_x$ and
$$
\lim_{E_x\ni y\to x}
\frac{|f(y)-f(x)-L(y-x)|}{|y-x|} = 0.
$$
This definition is equivalent to the classical one, see Appendix.
The approximate derivative $L$ (if it exists) is unique and is denoted by ${\ap D}f(x)$. In the case of mappings into $\bbbr^n$,
approximate differentiability means approximate differentiability of each component. The result of Federer \cite{Federer44},
mentioned above, can be stated as follows (see also \cite{FedererBook,HajlaszChange}). By a homeomorphism $\Phi:\Omega\to\bbbr^n$
we mean a homeomorphism onto the image $\Phi(\Omega)$.

\begin{theorem}[Federer]
\label{FedererThm}
Suppose that $\Phi:\Omega\to\bbbr^n$ is a homeomorphism defined on an open set
$\Omega\subset\bbbr^n$ that
has the Lusin property (N) and is approximately differentiable a.e. Then the change of variables formula \eqref{E1}
holds true, where $J_\Phi (x) = \det \ap D\Phi(x)$.
\end{theorem}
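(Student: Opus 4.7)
The plan is to reduce to the classical area formula for Lipschitz maps via a Whitney-type decomposition of $\Omega$. The starting tool is the fact (sometimes called the Whitney–Federer or Whitney–Lusin approximation theorem for approximately differentiable mappings) that an almost everywhere approximately differentiable map $\Phi:\Omega\to\bbbr^n$ can be decomposed as
\[
\Omega = N\cup\bigcup_{k=1}^{\infty} E_k,\qquad |N|=0,
\]
where each $E_k$ is measurable and $\Phi|_{E_k}=\Phi_k|_{E_k}$ for some Lipschitz map $\Phi_k:\bbbr^n\to\bbbr^n$. Without loss of generality I would take the $E_k$ pairwise disjoint. Since $\Phi$ is a homeomorphism it is injective, hence the images $\Phi(E_k)$ are pairwise disjoint as well; and since $\Phi$ has property (N), $|\Phi(N)|=0$, so both sides of \eqref{E1} are unchanged after throwing away $N$ and $\Phi(N)$.

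Next I would identify the Jacobians. At every point $x\in E_k$ which is simultaneously a density point of $E_k$ and a point of approximate differentiability of $\Phi$ and of classical differentiability of $\Phi_k$ — this is a full-measure subset of $E_k$ — one shows that $\ap D\Phi(x)=D\Phi_k(x)$. This is immediate from the definition of approximate derivative, since the density-one set $E_x$ witnessing approximate differentiability of $\Phi$ can be intersected with $E_k$, where $\Phi$ and $\Phi_k$ agree. Consequently $J_\Phi(x)=\det D\Phi_k(x)$ almost everywhere on $E_k$.

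Now I would invoke the classical area formula for the Lipschitz map $\Phi_k$, which gives, for any nonnegative measurable $g$,
\[
\int_{E_k} g(x)\,|\det D\Phi_k(x)|\,dx
=\int_{\Phi_k(E_k)} \#\{x\in E_k:\Phi_k(x)=y\}\,g(\Phi_k^{-1}(y))\,dy.
\]
Because $\Phi_k|_{E_k}=\Phi|_{E_k}$ and $\Phi$ is injective, the multiplicity is $0$ or $1$, and $\Phi_k(E_k)=\Phi(E_k)$, so the right-hand side collapses to $\int_{\Phi(E_k)} g(\Phi^{-1}(y))\,dy$. Using the identification $J_\Phi=\det D\Phi_k$ on $E_k$ from the previous step yields \eqref{E1} on the pair $(E_k,\Phi(E_k))$.

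Finally, I would sum over $k$: the left-hand sides assemble to $\int_{\Omega\setminus N}g\,|J_\Phi|\,dx=\int_\Omega g\,|J_\Phi|\,dx$, and the right-hand sides, by disjointness of $\Phi(E_k)$ and the (N) property applied to $N$, assemble to $\int_{\Phi(\Omega)\setminus\Phi(N)} g(\Phi^{-1}(y))\,dy=\int_{\Phi(\Omega)} g(\Phi^{-1}(y))\,dy$. The main obstacle is really the first step: establishing the Lipschitz decomposition $\Omega=N\cup\bigcup E_k$ for merely approximately differentiable maps. Once that decomposition is in hand together with the matching of derivatives on density points, everything else is a book-keeping exercise built on Rademacher's theorem and the Lipschitz area formula, with the Lusin (N) hypothesis used only to discard the exceptional set on the image side.
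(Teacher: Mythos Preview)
The paper does not contain its own proof of this theorem: it is stated in the Introduction as a classical result of Federer, with pointers to \cite{Federer44}, \cite{FedererBook}, and \cite{HajlaszChange}, and is then used as a black box (for instance in Corollary~\ref{C1}). So there is nothing in the paper to compare your argument against.

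That said, your outline is the standard proof and is essentially correct; it is in fact the argument carried out in the cited reference \cite{HajlaszChange}. The Whitney--Federer decomposition of an a.e.\ approximately differentiable map into countably many Lipschitz pieces plus a null set is exactly the right starting point, the identification $\ap D\Phi = D\Phi_k$ at density points of $E_k$ is correct, and the Lusin (N) hypothesis is used precisely where you say, to discard $\Phi(N)$ on the image side. One small notational slip: in the displayed area formula you write $g(\Phi_k^{-1}(y))$ on the right, but $\Phi_k$ is merely Lipschitz and need not be injective, so $\Phi_k^{-1}$ is not defined. The correct general form is $\int_{\bbbr^n}\sum_{x\in E_k,\ \Phi_k(x)=y} g(x)\,dy$; you then use injectivity of $\Phi$ on $E_k$ to rewrite this as $\int_{\Phi(E_k)} g(\Phi^{-1}(y))\,dy$, which is what you do in the next sentence anyway.
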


This result has further generalizations, \cite{FedererBook}, known as the area and the co-area formulae,
where it is not required that $\Phi$ is continuous or one-to-one. One can even allow $\Phi$
to be a mapping between spaces of different dimensions. However, for the purpose of this paper we will focus on the
case when $\Phi$ is a homeomorphism, as stated in Theorem~\ref{FedererThm}.

The class of mappings that are approximately differentiable a.e. has been studied in \cite{Whitney}, where many
equivalent characterizations were provided. For example, all mappings in the Sobolev space $W^{1,p}$ are
approximately differentiable a.e. (see \cite{HajlaszChange}). Here $W^{1,p}$ stands for the space of functions (or mappings)
in $L^p$ whose weak derivatives are also in $L^p$.
The class of mappings that are approximately differentiable a.e., along with the change of variables, the area and the co-area formulae,
plays a fundamental role in geometric measure theory and its applications to calculus of variations.
It is of particular importance in the approach
to the nonlinear elasticity introduced by Ball \cite{Ball}, and developed for example in \cite{HajlaszKoskela,MullerQY,MullerS,Sverak}.
A natural and important question is how to relate the topological properties of the mapping to the properties of the Jacobian.
For example, it is easy to prove that if a homeomorphism defined on a domain is differentiable a.e. in the classical sense, then the Jacobian cannot change sign,
see \cite[Theorem~5.22]{HenclK}\footnote{Although in the statement of Theorem~5.22 it is assumed that the homeomorphism belongs to $W^{1,1}_{\rm loc}$, this assumption
is never used in the proof.}. Since homeomorphisms in $W^{1,p}$ are differentiable a.e. when $p>n-1$, \cite[Corollary~2.25]{HenclK}, the Jacobian of such a homeomorphism
cannot change sign. However, without assuming the Lusin condition (N) it may happen that the Jacobian equals zero a.e., when $1\leq p<n$, see
\cite{Hencl} and also \cite{Cerny,DHS,LiuM}.

The following questions were asked by Haj\l{}asz in 2001 (see \cite[Section~5.4]{HenclK} and \cite[p. 234]{HenclM}).

\noindent
{\sc Question~1.}
{\em Is it possible to construct a homeomorphism $\Phi:(0,1)^n\to\bbbr^n$ which is approximately differentiable a.e., has the Lusin property (N) and
at the same time $J_\Phi>0$ on a set of positive measure and $J_\Phi<0$ on a set of positive measure?}

\noindent
{\sc Question~2.}
{\em Is it possible to construct a homeomorphism $\Phi:[0,1]^n\to [0,1]^n$ which is approximately differentiable a.e., has the Lusin property (N),
equals to the identity on the boundary (and hence it is sense preserving in the topological sense\footnote{See remarks at the end of the Introduction}), but $J_\Phi<0$ a.e.?}

\noindent
{\sc Question~3.}
{\em Is it possible to construct a homeomorphism $\Phi:(0,1)^n\to\bbbr^n$ of the Sobolev class $W^{1,p}$, $1\leq p<n-1$, such that at the same time
$J_\Phi>0$ on a set of positive measure and $J_\Phi<0$ on a set of positive measure?}

The answer to Question~1 is in the positive. An example of such a homeomorphism has been known to the authors since 2001,
but it has never been published. It is our Lemma~\ref{base}.

This example has an interesting consequence for the change of variables formula. The homeomorphism $\Phi$ from Lemma~\ref{base} is orientation
preserving and it satisfies the assumptions of Theorem~\ref{FedererThm} so the change of variables formula \eqref{E1} is true. However, $|J_\Phi|$
on the left hand side of \eqref{E1} cannot be replaced by $J_\Phi$ despite the fact that $\Phi$ is orientation preserving.

Using an iterative procedure involving Lemma~\ref{base} we can also answer the Question~2
in the positive, but the construction is much more difficult.
In the main result of the paper, Theorem~\ref{main}, we actually prove much more. We construct such a homeomorphism $\Phi$
with $J_\Phi=-1$ a.e. and we prove that our homeomorphism can be uniformly approximated by a sequence of measure and orientation
preserving diffeomorphisms (i.e., with the Jacobian equal $+1$).
The motivation for this result partially stems from the dynamics of measure preserving homeomorphisms \cite{AlpernP}.

Question~3 has also been answered. As was already pointed out, when $p>n-1$, the Jacobian of a Sobolev $W^{1,p}$ homeomorphism $\Phi$ cannot change sign.
The argument used above was based on a.e. differentiability of $\Phi$.
Another argument can be based on the degree theory \cite{FonsecaG}, and the fact that
the Sobolev embedding theorem allows us to control the topological behavior of $\Phi$ on almost all spheres.
The argument can be extended to the case $p=n-1$ (never published), but
it completely fails when $p<n-1$,
and Haj\l{}asz conjectured back in 2001 that in that case a Sobolev homeomorphism can change the sign of the Jacobian. However, Hencl and Mal\'y \cite{HenclM}
proved that when $n=2,3$, $p\geq 1$ or $n\geq 4$,
$p>[n/2]$ (integer part of $n/2$), a Sobolev homeomorphism cannot change the sign of the Jacobian. This time,
instead of topological degree, Hencl and Mal\'y used the notion of linking number.
The case $n\geq 4$, $1\leq p\leq [n/2]$
was left open and
very recently, after a preliminary version of our paper has already been completed (see reference [18] in \cite{HenclV}),
Hencl and Vejnar \cite{HenclV} answered the Question~3 in the positive when $p=1$ and $n\geq 4$ by constructing a $W^{1,1}$
homeomorphism in $\bbbr^n$, $n\geq 4$, whose Jacobian changes sign.
It easily follows that this homeomorphism cannot be approximated by smooth diffeomorphisms in the Sobolev norm (see \cite[Corollary~1.2]{HenclV}).
This is in contract with the case $n=2$ where every Sobolev homeomorphism can be approximated by smooth diffeomorphisms in the Sobolev norm, see
\cite{HenclP,iwaniecko}. The case $n=3$ remains open.

In this paper we focus on Question~2 without assuming Sobolev regularity of the homeomorphism.
On the other hand, in our main result, Theorem~\ref{main}, we are concerned with the Jacobian equal $-1$ a.e.
and our result is interesting from the perspective of dynamics of measure preserving homeomorphisms.

Let us denote by
$$
d(\Phi,\Psi)=\sup_{x\in Q}|\Phi(x)-\Psi(x)|+\sup_{x\in Q}|\Phi^{-1}(x)-\Psi^{-1}(x)|
$$
the {\em uniform metric} in the space of homeomorphisms of the unit cube $Q=[0,1]^n$ onto itself. It turns out that the subspace of measure preserving
homeomorphisms is a complete metric space with respect to the metric $d$. This simple fact plays a fundamental role in the theory of dynamical systems,
\cite{AlpernP,Pilyugin}.
\begin{lemma}
\label{limit meas pres}
Let $\Phi_{k}:Q\to Q$, $k=1,2,\ldots$ be a Cauchy sequence of surjective homeomorphisms in the uniform metric $d$. Then
\begin{itemize}
\item[(a)] $\Phi_{k}$ converges uniformly to a homeomorphism $\Phi:Q\to Q$,
\item[(b)] $\Phi_{k}^{-1}$ converges uniformly, and the limit is equal to $\Phi^{-1}$,
\end{itemize}
If in addition the homeomorphisms $\Phi_k$ are measure preserving, then
\begin{itemize}
\item[(c)] $\Phi$ is a measure preserving homeomorphism.
\end{itemize}
\end{lemma}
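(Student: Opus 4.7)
The plan is to exploit completeness of the space of continuous self-maps of $Q$ in the uniform norm for (a) and (b), identify the two uniform limits as mutual inverses by a routine continuity argument, and then pass to the limit in an integral characterization of measure preservation to obtain (c).

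For (a), I observe that $d(\Phi_k,\Phi_\ell)$ dominates $\sup_{x\in Q}|\Phi_k(x)-\Phi_\ell(x)|$, so $\{\Phi_k\}$ is uniformly Cauchy and hence converges uniformly to some continuous $\Phi:Q\to\bbbr^n$. Because $\Phi_k(Q)\subseteq Q$ and $Q$ is closed, $\Phi(Q)\subseteq Q$. The same argument applied to $\{\Phi_k^{-1}\}$, which by definition of $d$ is also uniformly Cauchy, produces a continuous limit $\Psi:Q\to Q$.

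To identify $\Psi$ with the inverse of $\Phi$, I would fix $x\in Q$ and use the identity $x=\Phi_k(\Phi_k^{-1}(x))$ together with the triangle inequality
$$
|\Phi(\Psi(x))-x|\;\le\;|\Phi(\Psi(x))-\Phi(\Phi_k^{-1}(x))|\;+\;|\Phi(\Phi_k^{-1}(x))-\Phi_k(\Phi_k^{-1}(x))|.
$$
The first summand vanishes in the limit by continuity of $\Phi$ together with $\Phi_k^{-1}(x)\to\Psi(x)$, and the second by the uniform convergence $\Phi_k\to\Phi$. Reversing the roles of $\Phi$ and $\Psi$ gives $\Psi(\Phi(x))=x$. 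Thus $\Phi$ is a continuous bijection of $Q$ onto itself with continuous inverse $\Psi$, i.e., a homeomorphism, and the convergence $\Phi_k^{-1}\to\Phi^{-1}$ in (b) follows immediately.

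For (c), I would rewrite the measure preservation of $\Phi_k$ in the equivalent form $\int_Q f(\Phi_k(x))\,dx=\int_Q f(x)\,dx$ for every continuous $f:Q\to\bbbr$. Fixing such an $f$, uniform continuity of $f$ and uniform convergence $\Phi_k\to\Phi$ yield $f\circ\Phi_k\to f\circ\Phi$ uniformly on $Q$, so passing to the limit gives $\int_Q f\circ\Phi=\int_Q f$. A standard density argument (approximating characteristic functions of open sets by continuous functions and invoking outer regularity of Lebesgue measure) then promotes this to $|\Phi^{-1}(A)|=|A|$ for every Borel $A\subseteq Q$, establishing (c). The only genuinely delicate point is the identification $\Psi=\Phi^{-1}$ in (b): this is precisely where the second term in $d(\cdot,\cdot)$ is indispensable, since a uniform limit of homeomorphisms need not be a homeomorphism without an a priori bound on the inverses.
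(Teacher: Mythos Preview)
Your treatment of (a) and (b) is essentially identical to the paper's: both extract uniform limits $\Phi$ and $\Psi$ from the two halves of the metric $d$ and then pass to the limit in $\Phi_k(\Phi_k^{-1}(x))=x$ (and its companion) to identify $\Psi=\Phi^{-1}$.

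For (c) you take a genuinely different route. The paper argues set-theoretically: for a compact $A$ and $\eps>0$ it chooses $\delta>0$ so that the $\delta$-neighborhood of $\Phi(A)$ has measure at most $|\Phi(A)|+\eps$, observes that $\Phi_k(A)$ is trapped in that neighborhood for large $k$, and concludes $|A|=|\Phi_k(A)|\le|\Phi(A)|+\eps$; the reverse inequality comes from applying the same reasoning to $\Phi_k^{-1}$ and $\Phi(A)$. Your argument is functional: you encode measure preservation as $\int_Q f\circ\Phi_k=\int_Q f$ for continuous $f$, pass to the limit using uniform convergence, and then invoke the fact that a finite Borel measure on $Q$ is determined by its action on $C(Q)$. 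Both are correct. Your approach is cleaner in that a single limit handles both inequalities at once, at the cost of citing the Riesz-type identification of measures via continuous test functions; the paper's argument is more elementary and self-contained, needing only outer regularity for a single compact set, but requires the symmetric second pass with the inverses.
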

\begin{proof}
Obviously $\Phi_{k}$ and $\Phi_{k}^{-1}$ are Cauchy sequences in the space of continuous mappings $C(Q,Q)$, thus they converge (uniformly)
to some $\Phi$ and $\Psi\in C(Q,Q)$, respectively.
To see that $\Psi=\Phi^{-1}$, fix a point $x\in Q$ and pass with $k$ to the limit in the equality $\Phi_{k}(\Phi_{k}^{-1}(x))=x$ to prove that $\Phi(\Psi(x))=x$.
We show that $\Psi(\Phi(x))=x$ in an analogous way. Thus $\Phi$ is a homeomorphism, and we have established (a) and (b).
To check (c), fix a compact set $A\subset Q$.  For any $\eps>0$, let $\delta>0$ be chosen in such a way that the $\delta$-tubular neighborhood
$(\Phi(A))_{\delta}$ of $\Phi(A)$ has measure not larger than $|\Phi(A)|+\eps$. Since $\Phi_{k}\xrightarrow{d} \Phi$,
we can find $k\in\bbbn$ such that $\sup_{x\in Q}|\Phi_{k}(x)-\Phi(x)|<\delta$, in particular
$\Phi_{k}(A)\subset (\Phi(A))_{\delta}$, therefore $|A|=|\Phi_{k}(A)|\leq |\Phi(A)|+\eps$. Since $\eps$ is arbitrary, we have that $|A|\leq |\Phi(A)|$.
Applying the same reasoning to the sequence $\Phi_{k}^{-1}$ and the set $\Phi(A)$ we show that
$|\Phi(A)|\leq |\Phi^{-1}(\Phi(A))|=|A|$, therefore $|\Phi(A)|=|A|$, which easily implies (c).
\end{proof}

\begin{corollary}
\label{C1}
Let $\Phi_k:Q\to Q$, $k=1,2,\ldots$ be a sequence of measure preserving diffeomorphisms that is convergent
in the uniform metric $d$ to a homeomorphism $\Phi$. Then $\Phi$ is measure preserving.
If in addition $\Phi$ is approximately differentiable a.e., then $|J_\Phi|=1$ a.e.
\end{corollary}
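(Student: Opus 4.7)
The first half of the statement is essentially free: each $\Phi_k$ is in particular a homeomorphism, so Lemma~\ref{limit meas pres} applies and part (c) yields that $\Phi$ is a measure preserving homeomorphism of $Q$ onto itself. The actual content is the second assertion, and my plan is to obtain it by feeding characteristic functions into Federer's change of variables formula (Theorem~\ref{FedererThm}).

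To apply Theorem~\ref{FedererThm} I need to check its hypotheses for $\Phi$. Approximate differentiability a.e. is assumed. The Lusin property (N) is automatic for any measure preserving homeomorphism: if $|E|=0$ then $|\Phi(E)|=|E|=0$. So Theorem~\ref{FedererThm} gives the change of variables identity
$$
\int_Q g(x)\,|J_\Phi(x)|\,dx = \int_Q g(\Phi^{-1}(y))\,dy
$$
for every bounded measurable $g$, where we have used that $\Phi(Q)=Q$.

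With the formula in hand, I would take $g=\chi_E$ for an arbitrary measurable $E\subset Q$. The right hand side becomes $|\Phi(E)|$, which equals $|E|$ by measure preservation, while the left hand side is $\int_E |J_\Phi|\,dx$. Thus $\int_E |J_\Phi|\,dx = |E|$ for every measurable $E\subset Q$, which forces $|J_\Phi|=1$ almost everywhere.

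There is really no obstacle: the two nontrivial pieces — that the uniform limit of measure preserving homeomorphisms is measure preserving, and that under the stated regularity the classical change of variables formula is valid — are already packaged in Lemma~\ref{limit meas pres} and Theorem~\ref{FedererThm}, respectively. The only small point to notice is the double use of measure preservation: once to obtain property (N) so that Federer's theorem is applicable, and once to evaluate the right hand side of the change of variables formula.
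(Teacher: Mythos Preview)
Your proof is correct and follows essentially the same route as the paper: invoke Lemma~\ref{limit meas pres} for measure preservation, note that measure preservation gives the Lusin property (N), and then apply Federer's change of variables with characteristic functions. The only cosmetic difference is that the paper tests with $g=\chi_{B(x,r)}$ and finishes via the Lebesgue differentiation theorem, whereas you test with arbitrary measurable $E$ and conclude directly.
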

\begin{proof}
The fact that $\Phi$ is measure preserving follows from Lemma~\ref{limit meas pres}. In particular $\Phi$ has the Lusin property (N).
If $\Phi$ is also approximately differentiable a.e., Theorem~\ref{FedererThm} shows that the change of variables formula \eqref{E1}
is satisfied. Taking $g=\chi_{B(x,r)}$ yields
$$
\int_{B(x,r)} |J_\Phi|=|\Phi(B(x,r)| = |B(x,r)|
\quad
\text{for all $x\in Q$ and $r<\dist(x,\partial Q)$.}
$$
Hence $|J_\Phi|=1$ a.e. by the Lebesgue differentiation theorem.
\end{proof}

The main result of the paper, Theorem~\ref{main}, shows that it might happen that $J_\Phi=-1$ a.e.
even if $J_{\Phi_k}=+1$ for all $k$. This shows the complexity of the homeomorphisms that can be obtained as limits
of measure preserving diffeomorphisms.

\begin{theorem}
\label{main}
There exists an almost everywhere approximately differentiable homeomorphism $\Phi$ of the cube $Q=[0,1]^n$ onto itself, such that
\begin{itemize}
 \item[(a)] $\Phi|_{\partial Q}=\id$,
 \item[(b)] $\Phi$ is measure preserving,
 \item[(c)] $\Phi$ is a limit, in the uniform metric $d$, of measure preserving $\C^\infty$-diffeomorphisms of $Q$ that are identity on the boundary,
 \item[(d)] the approximate derivative of $\Phi$ satisfies
 \begin{equation}
 \label{E2}
\ap D\Phi(x) =
\left[
\begin{array}{ccccc}
1       &      0       &   \ldots   &   0      &     0      \\
0       &      1       &   \ldots   &   0      &     0      \\
\vdots  &   \vdots     &   \ddots   &  \vdots  &    \vdots  \\
0       &      0       &   \ldots   &   1      &     0      \\
0       &      0       &   \ldots   &   0      &    -1      \\
\end{array}
\right]
\quad
\text{a.e. in $Q$}.
 \end{equation}
 \end{itemize}
\end{theorem}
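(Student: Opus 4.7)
My plan is to construct $\Phi$ as the uniform limit of a sequence $\Phi_0,\Phi_1,\ldots$ of measure preserving homeomorphisms of $Q$, each equal to the identity on $\partial Q$, and to control the approximate derivative pointwise at the limit. The backbone of the construction is Lemma~\ref{base}: by rescaling, truncating, and, where necessary, post-composing with the reflection $(x_1,\ldots,x_n)\mapsto(x_1,\ldots,-x_n)$, I may assume as a building block a measure preserving homeomorphism $T_\eps$ of a small cube onto itself, equal to the identity on its boundary, approximately differentiable almost everywhere, and such that $\ap DT_\eps$ equals the matrix displayed in \eqref{E2} on a subset of the cube whose relative measure is at least $1-\eps$.

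I start with $\Phi_0=\id$ and at stage $k$ keep track of the ``bad set'' $A_k\subset Q$ on which $\ap D\Phi_k$ is not yet equal to the matrix in \eqref{E2}. To pass from $\Phi_k$ to $\Phi_{k+1}$, I cover $A_k$, up to a set of small measure, by a finite disjoint family of cubes $\{Q_{k,i}\}$ of diameter less than $2^{-k-1}$; on each $Q_{k,i}$ I plug in a rescaled-translated copy $T_{k,i}$ of $T_{\eps_k}$, for a sequence $\eps_k\searrow 0$ chosen below, with $T_{k,i}=\id$ on $\partial Q_{k,i}$; and I define $\Phi_{k+1}$ to agree with $\Phi_k$ off $\bigcup_iQ_{k,i}$ and with $\Phi_k\circ T_{k,i}$ on each $Q_{k,i}$. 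Because $T_{k,i}$ is identity on $\partial Q_{k,i}$, the resulting $\Phi_{k+1}$ is a measure preserving homeomorphism of $Q$ onto itself, equal to the identity on $\partial Q$. Choosing the cube sizes small enough forces $d(\Phi_{k+1},\Phi_k)<2^{-k}$, so by Lemma~\ref{limit meas pres} the sequence converges uniformly to a measure preserving homeomorphism $\Phi\colon Q\to Q$ with $\Phi|_{\partial Q}=\id$. This gives (a) and (b).

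The main obstacle is (d), i.e.\ showing that $\ap D\Phi(x)$ equals the matrix in \eqref{E2} at almost every $x$. By construction $|A_{k+1}|\leq\eps_k|A_k|$ for $\eps_k\searrow 0$, so $|\bigcap_kA_k|=0$; hence for a.e.\ $x$ there is a first index $k_0$ with $x\notin A_{k_0}$, meaning $\ap D\Phi_{k_0}(x)$ already agrees with \eqref{E2}. At all later stages the modifications live on cubes $Q_{k,i}\subset A_k$, which do \emph{not} contain $x$; however, arbitrarily small such cubes may accumulate near $x$, so the technical heart of the proof is a density argument showing that $x$ is a density point of the set $\{y:\Phi_k(y)=\Phi_{k_0}(y)\text{ for all sufficiently large }k\}$. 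Once this is established, the chain rule for approximate derivatives (applied to $\Phi_{k_0}$ on a density-one restriction) yields $\ap D\Phi(x)=\ap D\Phi_{k_0}(x)$, which is the desired matrix. The density argument is where the rates $\eps_k$ and the geometry of the covers $\{Q_{k,i}\}$ must be balanced carefully.

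Finally, for (c), each $\Phi_k$ is a measure preserving homeomorphism of $Q$ that is the identity on $\partial Q$, and by a standard smoothing theorem (Dacorogna--Moser coupled with mollification, or an Oxtoby--Ulam type approximation result for measure preserving homeomorphisms of a cube) it can be approximated uniformly within $2^{-k}$ by a measure preserving $C^\infty$-diffeomorphism $\widetilde\Phi_k$ that remains the identity on $\partial Q$. The sequence $\widetilde\Phi_k$ then converges uniformly to $\Phi$ and proves (c). Note that although $J_{\widetilde\Phi_k}\equiv+1$, the limit satisfies $J_\Phi\equiv-1$; this is precisely the phenomenon advertised after the statement of the theorem, and it is compatible with Corollary~\ref{C1}, which only asserts $|J_\Phi|=1$ a.e.
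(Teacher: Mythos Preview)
Your outline has a genuine gap at the inductive step. You set $\Phi_{k+1}=\Phi_k\circ T_{k,i}$ on each cube $Q_{k,i}\subset A_k$, with $\ap DT_{k,i}=\mathfrak{R}$ on a large subset. But on $A_k$ the previous map $\Phi_k$ is at best a smooth diffeomorphism whose derivative is \emph{not} the identity (already for $k=1$ it is the cube-rearranging diffeomorphism of Lemma~\ref{base}); the chain rule gives $\ap D\Phi_{k+1}(x)=D\Phi_k(T_{k,i}(x))\cdot\mathfrak{R}\neq\mathfrak{R}$ on the good set of $T_{k,i}$. So the bad set does not shrink and the estimate $|A_{k+1}|\le\eps_k|A_k|$ is unjustified. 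The same obstruction blocks the construction of your building block $T_\eps$: Lemma~\ref{base} yields the reflection derivative only on relative measure $2^{-n}$, and iterating to reach $1-\eps$ runs into the identical chain-rule problem; ``rescaling, truncating, post-composing with a reflection'' does not avoid it. This issue, not the density argument you flag, is the technical heart of the construction. The paper resolves it by inserting, before each gluing, a \emph{linearization} of $F_k$ on small balls (Lemma~\ref{goodapprox}) followed by a measure preserving rearrangement (Corollary~\ref{corr2}) that turns the map into a pure translation on the cubes where the basic construction is to be placed; the chain rule then gives $I\cdot\mathfrak{R}=\mathfrak{R}$. Both corrections rest essentially on the Dacorogna--Moser theorem (Corollary~\ref{MP}).

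There is a second gap in your treatment of (c): the claim that each $\Phi_k$ can be uniformly approximated by a measure preserving $C^\infty$-diffeomorphism ``by a standard smoothing theorem'' is not standard---no general Oxtoby--Ulam or mollification result produces measure preserving \emph{diffeomorphisms} from an arbitrary measure preserving homeomorphism of the cube. The paper establishes (c) only for its specific $F_k$, using that each is assembled from finitely many diffeomorphic pieces and rescaled copies of the basic construction, the latter being uniform limits of their own finite-stage measure preserving diffeomorphisms (Corollary~\ref{000}); a careful induction (Lemma~\ref{Le}) then patches these approximations together.
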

It would be much easier to prove the result without conditions (b), (c), and with the condition (d) replaced by
$\ap D\Phi<0$ a.e. (see also \cite{GH}).
However, in order to prescribe the derivative as in \eqref{E2} we had to use deep results of
Dacorogna and Moser \cite{DacorognaMoser} on the existence of diffeomorphisms with the prescribed Jacobian.

It turns out that our construction gives a lot of flexibility in prescribing the derivative of $\Phi$ and the condition \eqref{E2}
can be easily replaced by many other ones. Because of this we believe that the following conjecture is true.

\noindent
{\sc Conjecture.}
{\em
For any measurable map $T:Q\to GL(n)$ such that
\begin{equation}
\label{E3}
\int_Q |\det T|=1
\end{equation}
there exists an a.e. approximately differentiable homeomorphism $\Phi$ with the Lusin property (N) such that
$\Phi|_{\partial Q}=\id$ and $D\Phi=T$ a.e.}

Clearly, condition \eqref{E3} is necessary due to the change of variables formula, Theorem~\ref{FedererThm}, valid for such homeomorphisms $\Phi$.
Our belief in this conjecture is also supported by the results of the papers \cite{Alberti,HajlaszMirra,MoonensP}.

The paper is structured as follows. In Section~\ref{Basic} we construct a homeomorphism which gives a positive answer to the Question~1, see Lemma~\ref{base}.
In Section~\ref{Tools} we introduce tools based on the results of Dacorogna and Moser \cite{DacorognaMoser}, which allow us to construct a large class of measure preserving
diffeomorphisms. In particular, we modify Lemma~\ref{base} so that the homemorphism is measure preserving, see Lemma~\ref{base plus}. Finally, in Section~\ref{1stAttempt} we
prove Theorem~\ref{main}, and in Appendix we prove the equivalence of our definition of the approximate derivative with the classical one.

We should mention that whenever we call a homeomorphism \emph{orientation preserving}, we consider the topological definition of the orientation: a homeomorphism is orientation preserving if it has local topological degree 1 at every point of its domain. Any homeomorphism of a cube is either orientation preserving or orientation reversing (local degree -1 at every point), see e.g. \cite[II.2.2]{ReichelderferRado} for the definition of the local topological degree of a mapping and \cite[II.2.4]{ReichelderferRado} for applications to homeomorphisms. We shall not need at any point the precise definition of the local degree, using instead the following observation: if a homeomorphism of a cube is equal to identity of the boundary, it can be extended by identity to a slightly larger cube, and in the points where it coincides with identity its local degree will be 1. Therefore its local degree is 1 at all points of the original, smaller cube, proving that such a homeomorphism is orientation preserving.

Whenever a homeomorphism is differentiable at a point, one can determine its local degree as the sign of its Jacobian determinant. This is not the case any more for approximately differentiable homeomorphisms, as is shown e.g. by Theorem \ref{main}.

Notation is pretty standard. The Lebesgue measure of a set $A$ is denoted by $|A|$.
Also we will always assume that cubes have edges parallel to the coordinate directions.

\section{Basic example}
\label{Basic}
In this section we construct an a.e. approximately differentiable, sense preserving homeomorphism of the unit cube $Q$ with the Lusin property (N)
and with the Jacobian determinant equal $-1$ on a set of a positive measure.
This construction is a conceptual basis for the main result, Theorem~\ref{main}, where this example is, after necessary modifications, iterated.
The iteration, however, requires a theorem of Dacorogna and Moser on measure preserving diffeomorphisms, explained in the next section.

\begin{lemma}
\label{base}
There exists an almost everywhere approximately differentiable homeomorphism  $\Phi$ of the unit cube $Q=[0,1]^n$ onto itself with the Lusin property (N)
and a compact set $A$ in the interior of $Q$ such that
\begin{itemize}
\item $\Phi=\id$ in a neighborhood of $\partial Q$.
\item $|A|=2^{-n}$,
\item $\Phi$ is the reflection $(x_1,\ldots,x_{n-1},x_n)\mapsto (x_1,\ldots,x_{n-1},1-x_n)$ on $A$, $\Phi(A)=A$, and $\Phi$ is a $\C^\infty$-diffeomorphism outside $A$,
\item at almost all points of the set $A$
\begin{equation}
\label{E4}
\ap D\Phi(x) =
\left[
\begin{array}{ccccc}
1       &      0       &   \ldots   &   0      &     0      \\
0       &      1       &   \ldots   &   0      &     0      \\
\vdots  &   \vdots     &   \ddots   &  \vdots  &    \vdots  \\
0       &      0       &   \ldots   &   1      &     0      \\
0       &      0       &   \ldots   &   0      &    -1      \\
\end{array}
\right]\, .
\end{equation}
\end{itemize}
\end{lemma}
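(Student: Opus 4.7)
The statement forces, through the approximate-derivative condition, that $\Phi$ must agree with the reflection $R(x)=(x_1,\ldots,x_{n-1},1-x_n)$ on all of $A$. I would therefore construct (i) a compact, nowhere dense, $R$-invariant set $A\subset\inter Q$ with $|A|=2^{-n}$, and (ii) a piecewise-explicit homeomorphism $\Phi\colon Q\to Q$ with $\Phi|_A=R|_A$, $\Phi=\id$ in a neighborhood of $\partial Q$, and $\Phi$ a $C^\infty$-diffeomorphism on $Q\setminus A$. The conclusion \eqref{E4} then follows automatically: by the Lebesgue density theorem almost every point of $A$ is a density point of $A$, and since $\Phi$ coincides there with the smooth map $R$ along a density-one approach set, the approximate derivative at such a point equals $DR=\operatorname{diag}(1,\ldots,1,-1)$. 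The Lusin property (N) is immediate: $\Phi$ is $C^\infty$ (hence locally Lipschitz) on the open set $Q\setminus A$, and equals the Euclidean isometry $R$ on $A$, so $\Phi$ maps null sets to null sets.

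The key topological obstruction shaping the construction is that $\Phi=\id$ near $\partial Q$ forces the local topological degree of $\Phi$ to be $+1$ everywhere (extend $\Phi$ by identity through $\partial Q$ to a larger cube), whereas $R$ has local degree $-1$; hence $\Phi=R$ cannot hold on any nonempty open set and $A$ must be nowhere dense. This pushes one toward a Cantor-type construction: I would take a symmetric fat Cantor set $C\subset(0,1)$ with $x\in C\iff 1-x\in C$ and combine it with a compact Cantor-like structure in the remaining $n-1$ coordinates so that the resulting $R$-invariant set $A\subset\inter Q$ has measure exactly $2^{-n}$.

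To extend $\Phi=R|_A$ to a homeomorphism smooth off $A$, I would assemble the map from a basic smooth-swap building block: in an open cube $W\subset\bbbr^n$ (this uses $n\geq 2$), one constructs a $C^\infty$-diffeomorphism, identity on $\partial W$, that exchanges two disjoint subcubes $B^-$ and $B^+=R(B^-)$ by an ambient isotopy routing them past each other through the $(x_1,\ldots,x_{n-1})$-transverse directions. On the pair of faces of $B^\pm$ perpendicular to the $x_n$-axis — the faces that eventually touch $A$ in the global construction — this swap can be arranged to agree with $R$. Inserting such a swap into every $R$-paired complementary region of $A$, at the appropriate scale, and gluing to the identity near $\partial Q$, produces $\Phi$.

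The main obstacle is the compatibility of these local swaps along the intricate interface $\partial A$ and in overlapping shells: for the global $\Phi$ to be a homeomorphism and smooth off $A$, the boundary values of adjacent swap diffeomorphisms must match $R|_A$ on the portions of their boundaries lying in $A$, and must match $\id$ (or a previously chosen smooth extension) elsewhere. Designing $A$ so that this matching is always possible — essentially by giving $A$ a self-similar product structure at every scale — and tuning the measure count to reach exactly $2^{-n}$ is the technical core of the construction. The dimensional hypothesis $n\geq 2$ is crucial: for $n=1$ the impossibility of a monotone homeomorphism of $[0,1]$ coinciding with $x\mapsto 1-x$ on a set of positive measure shows no such $\Phi$ exists, and the transverse directions provide the room needed to swap pairs of subcubes without violating global orientation preservation.
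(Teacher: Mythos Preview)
Your plan has the right ingredients but a genuine error in the assembly. The claim that the basic swap in $W$ ``can be arranged to agree with $R$'' on the $x_n$-perpendicular faces of $B^\pm$ is false. Suppose a diffeomorphism $\Psi$ of $W$, identity near $\partial W$, satisfied $\Psi=R$ on the bottom face of $B^-$. Then $\tilde\Psi:=R\circ\Psi|_{B^-}:B^-\to B^-$ is the identity on that face, so at any point of the face $D\tilde\Psi$ has the block form $\left(\begin{smallmatrix}I&*\\0&a\end{smallmatrix}\right)$; since $\tilde\Psi$ sends $\inter B^-$ to $\inter B^-$ one must have $a>0$, hence $J_{\tilde\Psi}>0$ on all of $B^-$ by connectedness, and therefore $J_\Psi=-J_{\tilde\Psi}<0$ on $B^-$. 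This contradicts $J_\Psi>0$, which is forced globally by $\Psi=\id$ near $\partial W$. So no single swap ever agrees with $R$ on an $(n{-}1)$-face, and the boundary-matching scheme you sketch along $\partial A$ cannot be realized. (A related obstruction: for $n\geq 2$ the complement $Q\setminus A$ is connected, so there are no separate ``complementary regions'' into which one can drop independent swaps; the product set $A'\times C$ does give slabs $(0,1)^{n-1}\times I$ disjoint from $A$, but those extend to $\partial Q$ in the first $n-1$ coordinates.)

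The paper avoids this by reversing the order: $\Phi$ and $A$ are built \emph{simultaneously} by iteration, and the reflection appears only in the limit. One takes $\Phi_1$ to be a smooth diffeomorphism of $Q$, identity near $\partial Q$, that \emph{translates} each of $2^n$ small subcubes (centered at dyadic points) onto its $R$-partner; inside each of these subcubes $\Phi_2$ applies a rescaled copy of the same translation-swap; and so on. Every $\Phi_k$ is an honest $C^\infty$-diffeomorphism, and the sequence is Cauchy in the uniform metric $d$ because the modifications at stage $k$ are confined to cubes of edge $\alpha_{k-1}\to 0$; hence it converges to a homeomorphism $\Phi$. The set $A$ is the intersection of the nested generations of subcubes, with edge-lengths chosen so that $2^{nk}\alpha_k^n\to 2^{-n}$. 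Off $A$, $\Phi$ eventually agrees with some $\Phi_k$ and is therefore smooth. On $A$, the infinite nest of up/down translations telescopes in the $x_n$-coordinate to $x_n\mapsto 1-x_n$, so $\Phi|_A=R$ --- but at no finite stage does any piece of the construction equal $R$ on any set of positive dimension. This telescoping mechanism is precisely what the ``design $A$ first, then extend $R|_A$'' viewpoint obscures.
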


\begin{proof}
Let $\alpha_{0}=1$, $\alpha_k=(2^{k+1}-1)^{-1}$ for $k\in\bbbn$.  Note that, for any $k$, $2\alpha_{k}<\alpha_{k-1}$ and $\lim_{k\to\infty}2^{nk}\alpha_{k}^{n}=2^{-n}$.

Let  $Q_1^{k},\ldots,Q_{2^n}^{k}$ be closed $n$-dimensional cubes of edge-length $\alpha_{k-1}^{-1}\alpha_k<1/2$, with dyadic (i.e. with all coordinates equal $1/4$ or $3/4$) centers $q_1,\ldots,q_{2^n}$, $q_j=(q_{j,1},\ldots,q_{j,n-1},q_{j,n})$, such that $q_{2^{n-1}+j}=(q_{j,1},\ldots,q_{j,n-1},1-q_{j,n})$. This means that the first $2^{n-1}$ cubes are in the bottom layer and the
last $2^{n-1}$ are in the upper layer right above the corresponding cubes from the lower layer (see the first cube on the left in Figure~\ref{fig:cuberearr}).
\begin{figure}[h]
\begin{picture}(0,0)%
\includegraphics{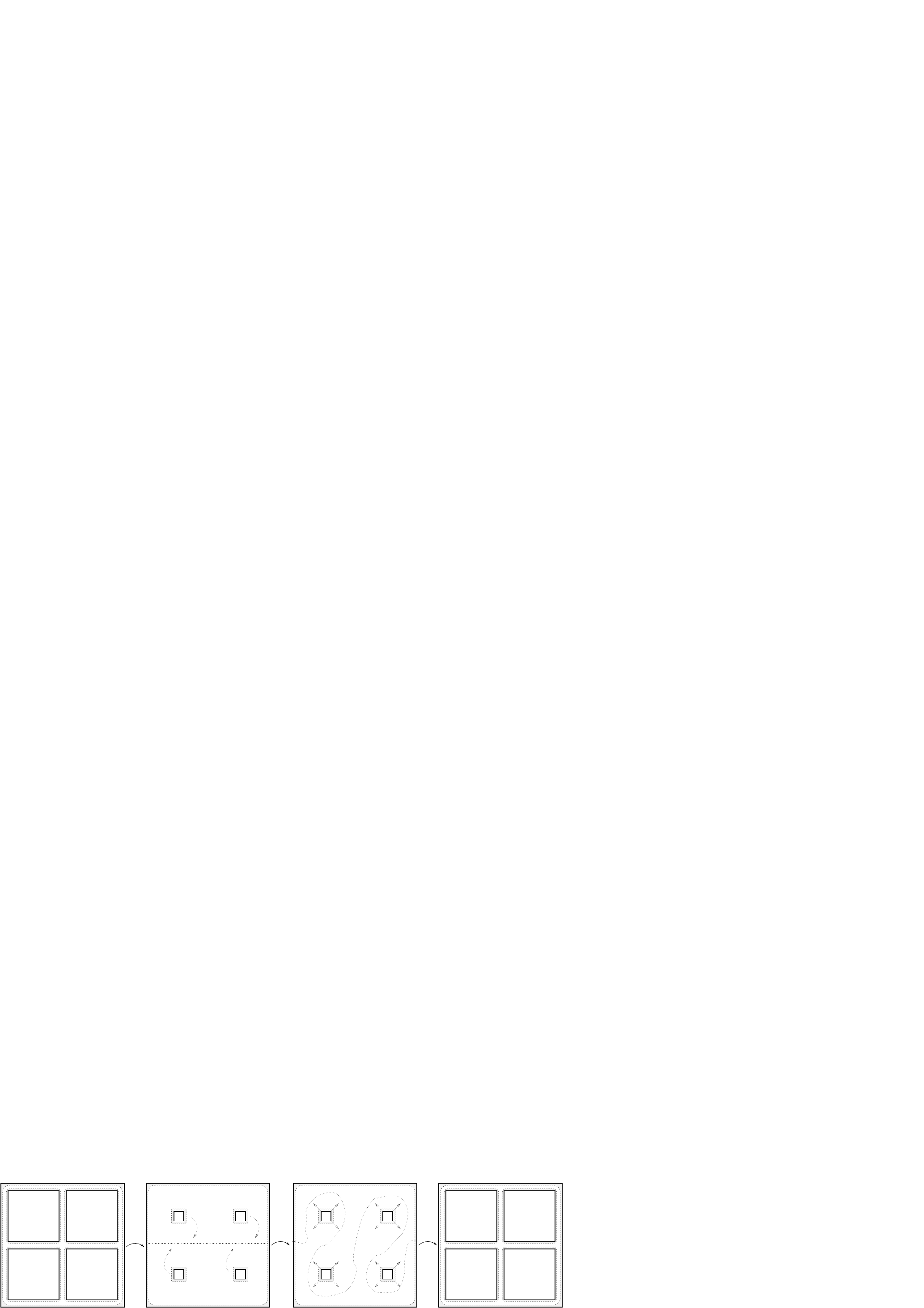}%
\end{picture}%
\setlength{\unitlength}{1026sp}%
\begingroup\makeatletter\ifx\SetFigFont\undefined%
\gdef\SetFigFont#1#2#3#4#5{%
  \reset@font\fontsize{#1}{#2pt}%
  \fontfamily{#3}\fontseries{#4}\fontshape{#5}%
  \selectfont}%
\fi\endgroup%
\begin{picture}(23241,5166)(1243,-5569)
\put(11077,-1886){\makebox(0,0)[lb]{\smash{{\SetFigFont{5}{6.0}{\rmdefault}{\bfdefault}{\updefault}{\color[rgb]{0,0,0}4}%
}}}}
\put(8527,-4286){\makebox(0,0)[lb]{\smash{{\SetFigFont{5}{6.0}{\rmdefault}{\bfdefault}{\updefault}{\color[rgb]{0,0,0}1}%
}}}}
\put(11077,-4286){\makebox(0,0)[lb]{\smash{{\SetFigFont{5}{6.0}{\rmdefault}{\bfdefault}{\updefault}{\color[rgb]{0,0,0}2}%
}}}}
\put(8527,-1886){\makebox(0,0)[lb]{\smash{{\SetFigFont{5}{6.0}{\rmdefault}{\bfdefault}{\updefault}{\color[rgb]{0,0,0}3}%
}}}}
\put(2476,-1936){\makebox(0,0)[lb]{\smash{{\SetFigFont{10}{12.0}{\rmdefault}{\bfdefault}{\updefault}{\color[rgb]{0,0,0}3}%
}}}}
\put(4876,-1936){\makebox(0,0)[lb]{\smash{{\SetFigFont{10}{12.0}{\rmdefault}{\bfdefault}{\updefault}{\color[rgb]{0,0,0}4}%
}}}}
\put(2476,-4336){\makebox(0,0)[lb]{\smash{{\SetFigFont{10}{12.0}{\rmdefault}{\bfdefault}{\updefault}{\color[rgb]{0,0,0}1}%
}}}}
\put(4876,-4336){\makebox(0,0)[lb]{\smash{{\SetFigFont{10}{12.0}{\rmdefault}{\bfdefault}{\updefault}{\color[rgb]{0,0,0}2}%
}}}}
\put(14602,-4286){\makebox(0,0)[lb]{\smash{{\SetFigFont{5}{6.0}{\rmdefault}{\bfdefault}{\updefault}{\color[rgb]{0,0,0}3}%
}}}}
\put(17152,-4286){\makebox(0,0)[lb]{\smash{{\SetFigFont{5}{6.0}{\rmdefault}{\bfdefault}{\updefault}{\color[rgb]{0,0,0}4}%
}}}}
\put(17152,-1886){\makebox(0,0)[lb]{\smash{{\SetFigFont{5}{6.0}{\rmdefault}{\bfdefault}{\updefault}{\color[rgb]{0,0,0}2}%
}}}}
\put(14602,-1886){\makebox(0,0)[lb]{\smash{{\SetFigFont{5}{6.0}{\rmdefault}{\bfdefault}{\updefault}{\color[rgb]{0,0,0}1}%
}}}}
\put(20551,-4411){\makebox(0,0)[lb]{\smash{{\SetFigFont{10}{12.0}{\rmdefault}{\bfdefault}{\updefault}{\color[rgb]{0,0,0}3}%
}}}}
\put(20551,-2011){\makebox(0,0)[lb]{\smash{{\SetFigFont{10}{12.0}{\rmdefault}{\bfdefault}{\updefault}{\color[rgb]{0,0,0}1}%
}}}}
\put(22951,-4411){\makebox(0,0)[lb]{\smash{{\SetFigFont{10}{12.0}{\rmdefault}{\bfdefault}{\updefault}{\color[rgb]{0,0,0}4}%
}}}}
\put(22951,-2011){\makebox(0,0)[lb]{\smash{{\SetFigFont{10}{12.0}{\rmdefault}{\bfdefault}{\updefault}{\color[rgb]{0,0,0}2}%
}}}}
\end{picture}%
%-------------------------
\caption{Diffeomorphism $F_k$ rearranging the cubes.}
\label{fig:cuberearr}
\medskip
\end{figure}

Denote by
$F_{k}$ a smooth diffeomorphism exchanging $Q_{j}^{k}$ with $Q_{2^{n-1}+j}^{k}$ (the restriction of $F_{k}$ to a neighborhood of each of $Q_j$ is a translation),
that additionally is identity near a neighborhood of $\partial Q$.
A construction of such a diffeomorphism is explained on Figure~\ref{fig:cuberearr}. The only difference between diffeomorphisms $F_k$ for different values of $k$ is that they rearrange cubes $Q_j^k$ of different sizes.

Our construction is iterative. The starting point is the diffeomorphism $\Phi_{1}=F_{1}$.
This diffeomorphism rearranges cubes $Q_j^1$ of the edge-length $\alpha_0^{-1}\alpha_1=\alpha_1$ inside the unit cube $Q$.
The diffeomerphism $\Phi_2$
coincides with $\Phi_1$ on $Q\setminus\bigcup_j Q_j^1$, but in the interior of each cube $Q_j^1$, rearranged by the diffeomorphism $\Phi_1$,
$\Phi_2$ is a rescaled and translated version of the diffeomorphism $F_2$, see Figure~\ref{fig:phi2}; it rearranges $2^{2n}$ cubes of the edge-length
$\alpha_1\cdot\alpha_1^{-1}\alpha_2=\alpha_2$.
Since the diffeomorphism $F_2$ is identity near the boundary of the cube $Q$, the rescaled versions of it applied to the cubes $Q^1_j$ will be identity near boundaries of
these cubes and hence the resulting mapping $\Phi_2$ will be a smooth diffeomorphism.
The diffeomorphism $\Phi_3$ coincides with $\Phi_2$ outside the $2^{2n}$ cubes of the second generation rearranged by $\Phi_2$ and it is
a rescaled and translated version of the diffeomorphism $F_3$ inside each of the cubes rearranged by the diffeomorphism $\Phi_2$. It rearranges $2^{3n}$ cubes of the edge-length
$\alpha_2\cdot\alpha_2^{-1}\alpha_3=\alpha_3$ etc.

\begin{figure}
\begin{picture}(0,0)%
\includegraphics{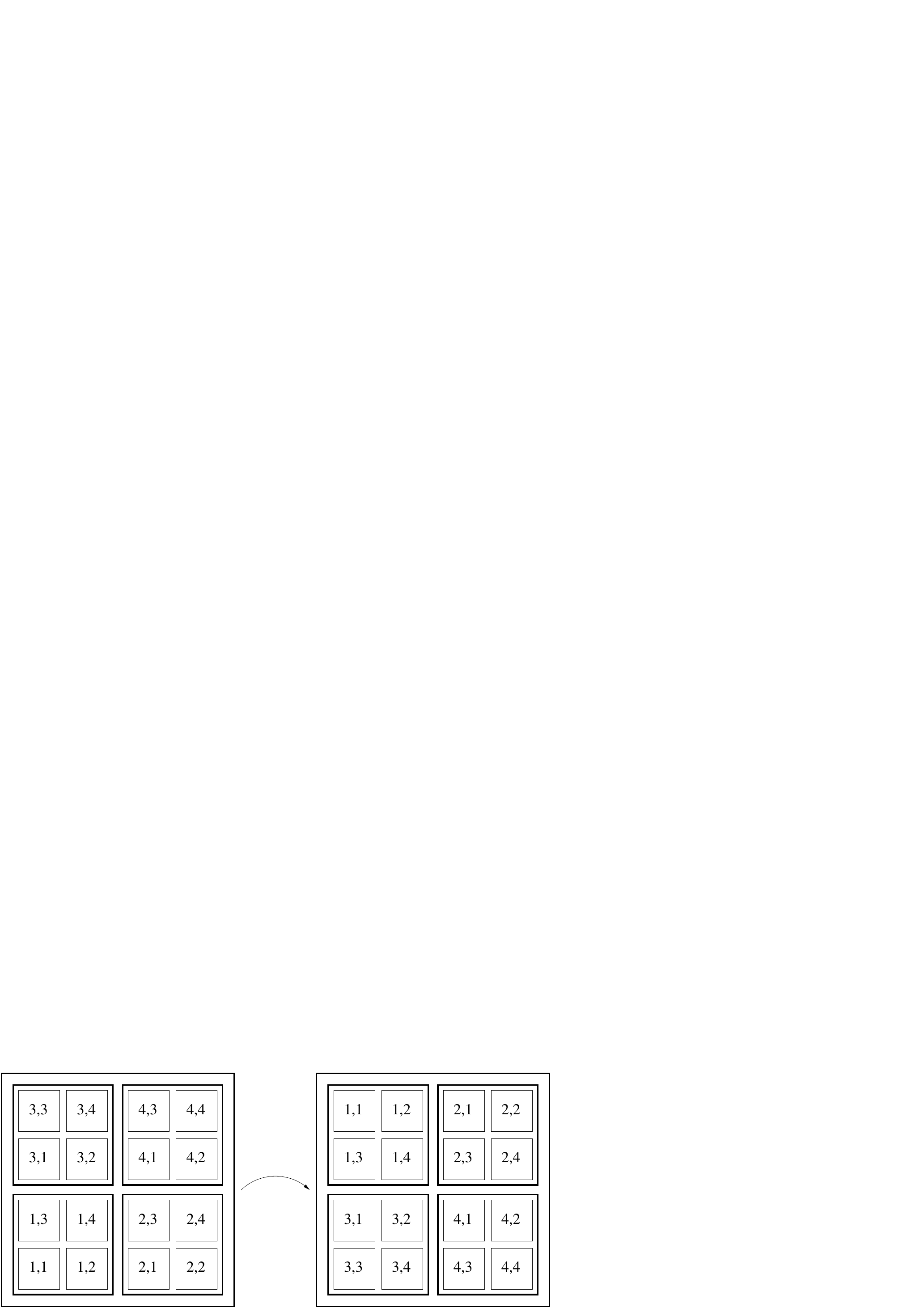}%
\end{picture}%
\setlength{\unitlength}{1934sp}%
\begingroup\makeatletter\ifx\SetFigFont\undefined%
\gdef\SetFigFont#1#2#3#4#5{%
  \reset@font\fontsize{#1}{#2pt}%
  \fontfamily{#3}\fontseries{#4}\fontshape{#5}%
  \selectfont}%
\fi\endgroup%
\begin{picture}(12066,5166)(1168,-5494)
\put(6901,-2461){\makebox(0,0)[lb]{\smash{{\SetFigFont{10}{12.0}{\familydefault}{\mddefault}{\updefault}{\color[rgb]{0,0,0}$\Phi_2$}%
}}}}
\end{picture}%
%-------------------------
\caption{$\Phi_2$ rearranges second generation cubes}
\label{fig:phi2}
\medskip
\end{figure}

At each step we obtain a smooth diffeomorphism $\Phi_k$ of $Q$; if we denote the union of all $2^{kn}$ cubes of the $k$-th generation by $\mathcal{Q}_k$, we obtain a descending family of compact sets, with
$$
A=\bigcap_{k=1}^\infty \mathcal{Q}_k.
$$
Since $|\mathcal{Q}_k|=2^{kn}\alpha_k^n\to 2^{-n}$, $A$ is a Cantor set of measure $|A|=2^{-n}$.

The sequence $\Phi_k$ is convergent in the uniform metric $d$. Indeed, for any $m\geq k$ the diffeomorphisms $\Phi_k$, $\Phi_m$, $\Phi_k^{-1}$ and $\Phi_m^{-1}$ relocate points inside
each of the cubes of the $(k-1)$th generation. The cubes of the $(k-1)$th generation have the edge-length $\alpha_{k-1}$ and hence diameter $\sqrt{n}\alpha_{k-1}$.
Thus
$$
d(\Phi_k,\Phi_m)=\sup_{\mathcal{Q}_{k-1}}|\Phi_k-\Phi_m|+\sup_{\mathcal{Q}_{k-1}}|\Phi_k^{-1}-\Phi_m^{-1}|\leq 2\sqrt{n}\alpha_{k-1}
\quad
\text{for any $m\geq k$.}
$$
We used here the fact that $|\Phi_k-\Phi_m|+|\Phi_k^{-1}-\Phi_m^{-1}|=0$ in $Q\setminus \mathcal{Q}_{k-1}$.

This proves that $\Phi_k$ is a Cauchy sequence in the metric $d$ and thus converges to a homeomorphism $\Phi$ by Lemma~\ref{limit meas pres}.

If $x\not\in A$, then $x\not\in\mathcal{Q}_k$ for some $k$ and hence $U\cap \mathcal{Q}_k=\emptyset$ for some open neighborhood $U$ \mbox{of $x$}. Thus $\Phi_k(y)=\Phi_m(y)$ for all $m\geq k$
and $y\in U$, because
for $m>k$ the diffeomorphisms $\Phi_m$ relocate points inside $\mathcal{Q}_{m-1}\subset\mathcal{Q}_k$ only.
Accordingly $\Phi=\Phi_k$ in $U$, so $\Phi$ is a smooth diffeomorphism outside $A$.
It is also easy to see that $\Phi:Q\to Q$ acts on $A$ as the reflection $(x_1,\ldots,x_{n-1},x_n)\mapsto (x_1,\ldots,x_{n-1},1-x_n)$. In particular $\Phi(A)=A$.

The homeomorphism $\Phi$ is a diffeomorphism outside the compact set $A$ and a fixed reflection in $A$. Hence it has the Lusin property. Moreover it is differentiable
in the classical sense in $Q\setminus A$ and approximately differentiable at the density points of $A$ with the approximate derivative equal to \eqref{E4}.
The proof is complete.
\end{proof}
\begin{remark}
Note that $\Phi$ constructed above is not in $W^{1,1}(Q)$, since it is not absolutely continuous on vertical lines passing through points of $A$: it maps them into curves of infinite length (see Figure \ref{fig:noabscont}).
\end{remark}
\begin{figure}[h]

\begin{picture}(0,0)%
\includegraphics{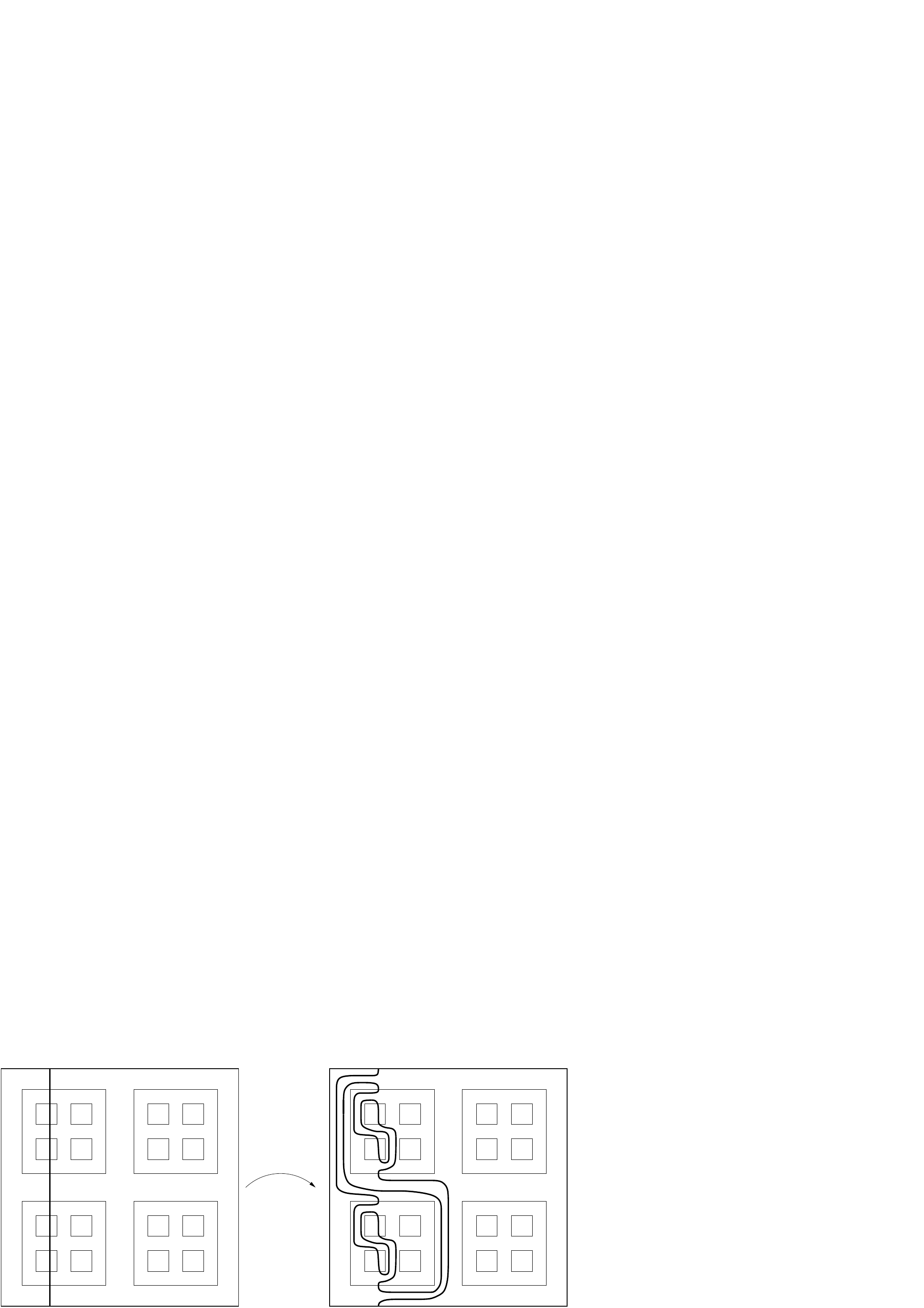}%
\end{picture}%
\setlength{\unitlength}{1973sp}%
\begingroup\makeatletter\ifx\SetFigFont\undefined%
\gdef\SetFigFont#1#2#3#4#5{%
  \reset@font\fontsize{#1}{#2pt}%
  \fontfamily{#3}\fontseries{#4}\fontshape{#5}%
  \selectfont}%
\fi\endgroup%
\begin{picture}(12194,5166)(1179,-5494)
\put(6901,-2461){\makebox(0,0)[lb]{\smash{{\SetFigFont{10}{12.0}{\familydefault}{\mddefault}{\updefault}{\color[rgb]{0,0,0}$\Phi_2$}%
}}}}
\end{picture}%
%----------------------------------
\caption{$\Phi$ maps vertical lines passing throught $C$ to curves of infinite length.}
\label{fig:noabscont}
\medskip
\end{figure}

\begin{remark}
In Section~\ref{Tools} we show that it is possible to modify the above construction so that the resulting homeomorphism is measure preserving
and it is a limit of measure preserving diffeomorphisms, see Lemma~\ref{base plus} and Corollary~\ref{000}.
\end{remark}

\section{Tools}
\label{Tools}

The following lemma is a special case of a theorem of Dacorogna and Moser (\cite[Theorem 7]{DacorognaMoser}, see also \cite[Theorem 10.11]{CsatoDacorognaKneuss}), who generalized earlier results of Moser \cite{Moser} and Banyaga~\cite{Banyaga}.
\begin{theorem}
\label{thm7DM}
Let  $\Omega$ be a bounded connected open subset of $\bbbr^n$ and let $f\in\C^{\infty}(\Omega)$ be a positive function equal 1 in a neighborhood of $\partial\Omega$ such that
$$
\int_\Omega f(x) dx=|\Omega|.
$$
Then there exists a $\C^{\infty}$ diffeomorphism $\Psi$ of $\Omega$ onto itself, that is identity on a neighborhood of $\partial\Omega$ and satisfies
$$
J_{\Psi}(x)=f(x)\quad \text{for all } x\in \Omega.$$
\end{theorem}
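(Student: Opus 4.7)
The plan is to invoke Moser's flow trick, reducing the problem to the solvability of a divergence equation with compactly supported data on $\Omega$.

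First I form the interpolation
$$f_t(x)=(1-t)f(x)+t,\qquad t\in[0,1],$$
so that $f_0=f$, $f_1\equiv 1$, each $f_t$ is smooth and positive (as a convex combination of positive functions), and $f_t\equiv 1$ in a neighborhood of $\partial\Omega$. Consider the family of volume forms $\omega_t=f_t\,dx$. I seek a time-dependent vector field $u_t$ on $\Omega$, smooth in $(t,x)$ and vanishing in a neighborhood of $\partial\Omega$, whose flow $\Psi_t$, defined by $\partial_t\Psi_t=u_t\circ\Psi_t$ and $\Psi_0=\id$, satisfies the Moser identity
$$\Psi_t^{\,*}\omega_t=\omega_0=f\,dx\qquad\text{for all }t\in[0,1].$$
At $t=1$ this reads $\Psi_1^{\,*}(dx)=f\,dx$, i.e.\ $J_{\Psi_1}=f$, so $\Psi:=\Psi_1$ will be the desired diffeomorphism, and it will equal $\id$ near $\partial\Omega$ because $u_t$ does.

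Differentiating the Moser identity in $t$ and applying Cartan's formula (with $d\omega_t=0$ since $\omega_t$ is a top-form) gives
$$\mathcal{L}_{u_t}\omega_t+\partial_t\omega_t=\bigl(\divv(f_t u_t)+(1-f)\bigr)\,dx=0,$$
so I need $\divv(f_t u_t)=f-1$. To make the equation independent of $t$, I look for a single vector field $v\in\C^\infty_c(\Omega,\bbbr^n)$ with
$$\divv v=f-1,$$
and then set $u_t:=v/f_t$. Since $f_t\geq\min(\inf_\Omega f,\,1)>0$ and is smooth, $u_t$ is smooth in $(t,x)$, shares the compact support of $v$ in $\Omega$, and hence vanishes near $\partial\Omega$. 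Standard ODE theory then produces the smooth flow $\Psi_t:\Omega\to\Omega$; each $\Psi_t$ is a diffeomorphism onto $\Omega$ equal to $\id$ near $\partial\Omega$ and, by construction, satisfies the Moser identity.

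The substantive step is solving the divergence equation $\divv v=f-1$ with $v\in\C^\infty_c(\Omega)$. The necessary compatibility $\int_\Omega(f-1)\,dx=0$ is exactly the hypothesis $\int_\Omega f=|\Omega|$, and $f-1\in\C^\infty_c(\Omega)$ because $f\equiv 1$ near $\partial\Omega$. I expect this to be the main obstacle: on a bounded connected open set it can be handled either by the Bogovski\u\i{} integral operator applied to the data, or more elementarily by using connectedness of $\Omega$ to cover $\supp(f-1)$ by a chain of overlapping open balls, decomposing $f-1$ as a finite sum of smooth bumps of mean zero supported in these balls (redistributing mass along the chain), and solving $\divv v_i=g_i$ on each ball by an explicit radial primitive such as $v_i(x)=(x-x_i)\int_0^1 g_i(x_i+s(x-x_i))\,s^{n-1}\,ds$. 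Summing the pieces gives $v\in\C^\infty_c(\Omega)$, and the Moser argument above then closes.
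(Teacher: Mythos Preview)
The paper does not prove this theorem; it is quoted as a special case of Dacorogna--Moser and the only remark is that a proof via Moser's \emph{flow method} can be found in \cite[Appendix, Lemma~2.3]{DacorognaDirect}. Your proposal is precisely an outline of that flow argument, so in spirit you are doing exactly what the paper points to. The interpolation $f_t$, the derivation of $\divv(f_t u_t)=f-1$, and the reduction to a single compactly supported solution of $\divv v=f-1$ are all correct, and once $v\in\C^\infty_c(\Omega,\bbbr^n)$ is in hand the flow argument closes as you say.

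The one concrete gap is in your ``explicit radial primitive'' on a ball. The formula
\[
v_i(x)=(x-x_i)\int_0^1 g_i\bigl(x_i+s(x-x_i)\bigr)\,s^{n-1}\,ds
\]
does satisfy $\divv v_i=g_i$, but it does \emph{not} in general have compact support even when $g_i\in\C^\infty_c(B(x_i,r))$ has mean zero: for $|x-x_i|>r$ one computes
\[
v_i(x)=\frac{x-x_i}{|x-x_i|^{n}}\int_0^{r} g_i\bigl(x_i+\rho\,\tfrac{x-x_i}{|x-x_i|}\bigr)\,\rho^{n-1}\,d\rho,
\]
which is the weighted ray integral of $g_i$ and typically does not vanish (it does only in special cases, e.g.\ radially symmetric $g_i$). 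So this particular primitive would not keep $v$ compactly supported in $\Omega$, and the flow would then leave $\Omega$. Your first alternative, the Bogovski\u{\i} operator, is the right tool here: it produces $v\in\C^\infty_c(\Omega)$ from $g\in\C^\infty_c(\Omega)$ with $\int g=0$ on any bounded domain (or star-shaped piece), and with that substitution your argument is complete.
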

Although the proofs in \cite{CsatoDacorognaKneuss} and \cite{DacorognaMoser} are written only for $f$ and $\Psi\in\C^{k}(\Omega)$ for some $k\in\bbbn$, they clearly work for $f$ and $\Psi\in\C^{\infty}(\Omega)$; for a proof using Moser's \emph{flow method} with $f,~\Psi\in \C^{\infty}$ see e.g. \cite[Appendix, Lemma 2.3]{DacorognaDirect} (the first edition of the book).

As a direct corollary we will prove that every diffeomorphism between bounded domains of equal volume can be corrected to a measure preserving diffeomorphism. More precisely we have
\begin{corollary}
\label{MP}
Let $\Omega,\Omega'\subset\bbbr^n$ be bounded domains of equal volume $|\Omega|=|\Omega'|$ and let $\Psi:\Omega\to\Omega'$ be a
$\C^\infty$-diffeomorphism of $\Omega$ onto $\Omega'$ such that $J_\Psi=1$ in a neighborhood of $\partial\Omega$. Then there is another
$\C^\infty$-diffeomorphism $\Phi:\Omega\to\Omega'$ mapping $\Omega$ onto $\Omega'$ such that $\Phi=\Psi$ in a neighborhood of $\partial\Omega$ and
$J_\Phi=1$ on $\Omega$.
\end{corollary}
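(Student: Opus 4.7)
The plan is to use Theorem~\ref{thm7DM} as a black box: it delivers a self-diffeomorphism of $\Omega$ with any prescribed positive smooth Jacobian (subject to the obvious normalization), and this is precisely what we need to ``correct'' $\Psi$ so that its Jacobian becomes identically $1$.

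Concretely, I would first set $f:=J_\Psi$ and verify that $f$ satisfies the hypotheses of Theorem~\ref{thm7DM}. Smoothness is immediate, and $f\equiv 1$ near $\partial\Omega$ by assumption. For positivity, observe that $J_\Psi$ is continuous and nowhere zero on the connected set $\Omega$ (since $\Psi$ is a diffeomorphism), and it equals $+1$ near $\partial\Omega$; hence $J_\Psi>0$ throughout $\Omega$. The integral condition comes from the classical change of variables:
$$\int_\Omega J_\Psi(x)\,dx \;=\; |\Psi(\Omega)| \;=\; |\Omega'| \;=\; |\Omega|.$$
Theorem~\ref{thm7DM} then produces a $\C^\infty$-diffeomorphism $\eta:\Omega\to\Omega$, equal to the identity in a neighborhood of $\partial\Omega$, with $J_\eta(x)=J_\Psi(x)$ for every $x\in\Omega$.

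Next I would define
$$\Phi := \Psi\circ\eta^{-1}:\Omega\to\Omega'.$$
This is a $\C^\infty$-diffeomorphism as a composition of two such maps. In a neighborhood of $\partial\Omega$ we have $\eta=\id$, so $\eta^{-1}=\id$ there and hence $\Phi=\Psi$ near $\partial\Omega$, as required. For the Jacobian, the chain rule together with $J_{\eta^{-1}}(x)=1/J_\eta(\eta^{-1}(x))$ gives
$$J_\Phi(x) \;=\; J_\Psi(\eta^{-1}(x))\cdot J_{\eta^{-1}}(x) \;=\; \frac{J_\Psi(\eta^{-1}(x))}{J_\eta(\eta^{-1}(x))} \;=\; 1,$$
since $J_\eta=J_\Psi$ on all of $\Omega$.

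I do not expect any genuine obstacle here: once Theorem~\ref{thm7DM} is available, the entire argument reduces to checking three routine conditions on $J_\Psi$ and then composing. The only point that requires a moment of thought is the upgrade from ``$J_\Psi$ nonvanishing'' to ``$J_\Psi>0$'', which is handled by the connectedness of $\Omega$ together with the boundary normalization $J_\Psi\equiv 1$ near $\partial\Omega$.
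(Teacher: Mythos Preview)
Your proof is correct and follows essentially the same strategy as the paper: apply Theorem~\ref{thm7DM} to produce a self-diffeomorphism with prescribed Jacobian, then compose with $\Psi$ to obtain a measure preserving map. The only cosmetic difference is that the paper applies Theorem~\ref{thm7DM} on the target domain $\Omega'$ with $f=J_{\Psi^{-1}}$ and sets $\Phi=\tilde{\Phi}\circ\Psi$, whereas you work on the source domain $\Omega$ with $f=J_\Psi$ and set $\Phi=\Psi\circ\eta^{-1}$; these are mirror versions of the same argument.
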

\begin{proof}
Let $f(x)=J_{\Psi^{-1}}(x)$. Clearly $f\in C^{\infty}$, $f>0$, $f=1$ in a neighborhood of $\partial\Omega'$, and
$$
\int_{\Omega'} f(x)\, dx = |\Psi^{-1}(\Omega')|=|\Omega|=|\Omega'|.
$$
According to Lemma~\ref{thm7DM} there is a diffeomorphism $\tilde{\Phi}:\Omega'\to\Omega'$ of class $C^{\infty}$ such that
$$
J_{\tilde{\Phi}}(x)= f(x) = J_{\Psi^{-1}}(x) = \frac{1}{J_{\Psi}(\Psi^{-1}(x))}
\quad
\text{for all $x\in\Omega$,}
$$
and $\tilde{\Phi}$ is identity in a neighborhood of $\partial\Omega'$. Let $\Phi=\tilde{\Phi}\circ\Psi$.
Clearly $\Phi=\Psi$ in a neighborhood of $\partial\Omega$ and
$$
J_{\Phi}(x) = J_{\tilde{\Phi}\circ\Psi}(x) =
J_{\tilde{\Phi}}(\Psi(x))\, J_{\Psi}(x) = \frac{J_{\Psi}(x)}{J_{\Psi}(\Psi^{-1}(\Psi(x)))} = 1.
$$
\end{proof}
The following result,  which is a direct consequence of Corollary~\ref{MP}, is a measure preserving version of a first step in the proof of Lemma~\ref{base}.
\begin{corollary}
\label{corr1}
Set $Q=[0,1]^n$. Let $Q_1,\ldots,Q_{2^n}$ be a family of disjoint, closed, $n$-dimen\-sion\-al cubes inside of $Q$, of edge length $\alpha$, $\alpha\in (0,1/2)$, with dyadic
(i.e. with all coordinates equal to $1/4$ or $3/4$) centers, such that the symmetry
$T$ with respect to the hyperplane $\{(x_1,\ldots,x_n):\,x_n=1/2\}$ maps $Q_j$ onto
$Q_{2^{n-1}+j}$ for $j=1,\ldots,2^{n-1}$.
Then there exists a $\C^\infty$-diffeomorphism $\Psi_\alpha:Q\to Q$ such that
\begin{enumerate}
\item[(a)] $\Psi_\alpha$ is identity on a neighborhood of $\partial Q$,
\item[(b)] $\Psi_\alpha$ acts as a translation on a neighborhood of each of $Q_j$, exchanging rigidly $Q_j$ with $Q_{j+2^{n-1}}$ for $j=1,\ldots,2^{n-1}$,
\item[(c)] $\Psi_\alpha$ is measure preserving.
\end{enumerate}
\end{corollary}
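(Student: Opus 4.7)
The plan is to combine the explicit construction of the diffeomorphism $F_{k}$ from the proof of Lemma~\ref{base} with the correction provided by Corollary~\ref{MP}. First I would build, exactly as in that proof (cf.\ Figure~\ref{fig:cuberearr}), a smooth diffeomorphism $F:Q\to Q$ which is the identity in a neighborhood of $\partial Q$ and, in a neighborhood of each cube $Q_{j}$, coincides with the unique translation that maps $Q_{j}$ rigidly onto $Q_{j+2^{n-1}}$. Such an $F$ automatically has properties (a) and (b); only property (c) may fail, since between these neighborhoods $F$ is an arbitrary smooth interpolation and its Jacobian need not be constantly one.

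To enforce (c) without spoiling (a) and (b), set $\Omega:=Q\setminus\bigcup_{j=1}^{2^{n}}Q_{j}$. For $n\geq 2$ this is a bounded connected open subset of $\bbbr^{n}$, and since $F$ permutes the cubes $Q_{j}$ as a set, it restricts to a $\C^{\infty}$-diffeomorphism of $\Omega$ onto itself. By construction, $J_{F}\equiv 1$ in a neighborhood of $\partial\Omega=\partial Q\cup\bigcup_{j}\partial Q_{j}$ (the identity near $\partial Q$, a translation near each $\partial Q_{j}$). Corollary~\ref{MP}, applied to $F|_{\Omega}:\Omega\to\Omega$, then yields a smooth diffeomorphism $\widetilde{F}:\Omega\to\Omega$ which coincides with $F$ in a neighborhood of $\partial\Omega$ and has $J_{\widetilde{F}}\equiv 1$ throughout $\Omega$.

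Finally, I would define $\Psi_{\alpha}$ by gluing: $\Psi_{\alpha}:=\widetilde{F}$ on $\Omega$ and $\Psi_{\alpha}:=F$ on each $Q_{j}$. Since $\widetilde{F}=F$ in a one-sided neighborhood of each $\partial Q_{j}$, and $F$ is the same translation on a two-sided neighborhood of $Q_{j}$, the two definitions agree on the overlap, so $\Psi_{\alpha}\in\C^{\infty}(Q,Q)$ is a diffeomorphism of $Q$ onto itself. Properties (a) and (b) are inherited from $F$; property (c) is immediate because $\Psi_{\alpha}$ has Jacobian identically one on $\Omega$ and acts as a permutation by translations on $\bigcup_{j}Q_{j}$. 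The only non-routine point to watch for is verifying that $\Omega$ is connected so that Corollary~\ref{MP} can be invoked in a single step; this is the case for every $n\geq 2$, which covers the regime of interest.
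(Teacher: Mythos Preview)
Your proposal is correct and is essentially the same approach the paper has in mind: start from the diffeomorphism $F_{k}$ of Lemma~\ref{base} (which already satisfies (a) and (b)) and invoke Corollary~\ref{MP} to make it measure preserving. The paper does not spell out the details, but your choice to apply Corollary~\ref{MP} on the domain $\Omega=\operatorname{int}(Q)\setminus\bigcup_{j}Q_{j}$ rather than on all of $\operatorname{int}(Q)$ is exactly the right refinement, since it guarantees the corrected map still coincides with the original translations near each $Q_{j}$; your remark on the connectedness of $\Omega$ for $n\geq 2$ is the only point requiring care, and you have addressed it.
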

If in the proof of Lemma~\ref{base} we construct the diffeomorphisms $\Phi_k$ with the help of Corollary~\ref{corr1}, we obtain a slightly stronger result, Lemma~\ref{base plus};
this result will be used in Section~\ref{1stAttempt}.
\begin{lemma}
\label{base plus}
There exists a homeomorphism $\Phi$ of the unit cube $Q=[0,1]^n$ which is measure preserving and has all the properties listed in Lemma~\ref{base}.
\end{lemma}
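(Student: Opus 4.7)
The plan is to run the iteration of Lemma~\ref{base} verbatim, with a single substitution: at every stage the smooth cube-rearranging diffeomorphism $F_k$ is replaced by the diffeomorphism $\Psi_{\alpha_{k-1}^{-1}\alpha_k}$ produced by Corollary~\ref{corr1}. This replacement diffeomorphism acts by the same rigid translations on neighborhoods of the $2^n$ sub-cubes (so on those sub-cubes it is indistinguishable from $F_k$), is identity near $\partial Q$, and, crucially, is globally measure preserving. Rescaled and translated copies of $\Psi_{\alpha_{k-1}^{-1}\alpha_k}$ are then inserted inside each $(k-1)$-generation cube exactly where rescaled copies of $F_k$ appeared in the original construction, producing smooth diffeomorphisms $\Phi_k : Q \to Q$.

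I would verify by induction that each $\Phi_k$ is measure preserving. The base case $\Phi_1 = \Psi_{\alpha_1}$ is immediate from Corollary~\ref{corr1}(c). For the inductive step, $\Phi_k$ coincides with the measure preserving map $\Phi_{k-1}$ on $Q\setminus\mathcal{Q}_{k-1}$, while on each cube of $\mathcal{Q}_{k-1}$ it is obtained from the corresponding piece of $\Phi_{k-1}$ (which is a pure translation, hence measure preserving) by composition with a rescaled, measure preserving copy of $\Psi_{\alpha_{k-1}^{-1}\alpha_k}$. Since each rescaled copy is identity near the boundary of its host cube, the global map $\Phi_k$ is a $\C^\infty$-diffeomorphism of $Q$, and it preserves Lebesgue measure piece by piece on the partition $(Q\setminus\mathcal{Q}_{k-1})\cup\bigcup_j Q_j^{k-1}$, hence globally.

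With measure preservation of every $\Phi_k$ in hand, the rest of the argument is inherited from the proof of Lemma~\ref{base}. The same telescoping estimate shows that $(\Phi_k)$ is Cauchy in the uniform metric $d$, so Lemma~\ref{limit meas pres} delivers a homeomorphism $\Phi$ as the uniform limit, and part (c) of that lemma upgrades $\Phi$ to a measure preserving homeomorphism of $Q$. All remaining properties listed in Lemma~\ref{base} (the Lusin property (N), the fact that $\Phi=\id$ near $\partial Q$, smoothness outside the Cantor set $A$, identification of $\Phi|_A$ with the vertical reflection, and the formula \eqref{E4} for $\ap D\Phi$ at a.e.\ point of $A$) depend only on the rigidity of each $\Phi_k$ near $A$ and on the dyadic structure of the families $\mathcal{Q}_k$; they are unaffected by the substitution $F_k\leadsto\Psi_{\alpha_{k-1}^{-1}\alpha_k}$. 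The only substantive obstacle in the proof is to find a measure preserving replacement for $F_k$ that is simultaneously identity near $\partial Q$ and effects the prescribed rigid rearrangement, and this is exactly what Corollary~\ref{corr1} provides.
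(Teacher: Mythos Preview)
Your proposal is correct and follows essentially the same approach as the paper: the paper's proof of Lemma~\ref{base plus} is simply the sentence ``If in the proof of Lemma~\ref{base} we construct the diffeomorphisms $\Phi_k$ with the help of Corollary~\ref{corr1}, we obtain a slightly stronger result,'' and you have spelled out exactly this substitution together with the (routine) verification that measure preservation propagates through the induction and survives the uniform limit via Lemma~\ref{limit meas pres}(c).
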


Let $\Phi$ be the homeomorphism constructed in Lemma~\ref{base plus}. It is the identity in a neighborhood of $\partial Q$.
Moreover, $\Phi$ is defined as the limit of measure preserving diffeomorphisms $\Phi_k$ constructed with the help of Corollary~\ref{corr1}.

Let $\mathcal{Q}_k$ be the union of all $2^{nk}$ cubes of the $k$-th generation from the proof of Lemma~\ref{base} (or Lemma~\ref{base plus}).
Recall that $A=\bigcap_k\mathcal{Q}_k$ is the Cantor set on which $\Phi$ is a reflection. Observe also that $\Phi=\Phi_k$ in $Q\setminus\mathcal{Q}_k$.
Thus if $K\subset Q\setminus A$ is a compact set, we can find $k\in\bbbn$ such that $K\subset Q\setminus\mathcal{Q}_k$ and hence
$\Phi=\Phi_k$ on $K$.
Since the diffeomorphisms $\Phi_k$ converge to $\Phi$ in the uniform metric $d$, we have
\begin{corollary}
\label{000}
For any $\eps>0$ and any compact set $K\subset Q\setminus A$ there is a measure preserving $\C^\infty$-diffeomorphism $\tilde{\Phi}$ of $Q$ onto itself such that
$\tilde{\Phi}=\id$ in a neighborhood of $\partial Q$, $\tilde{\Phi}=\Phi$ on $K$ and $d(\Phi,\tilde{\Phi})<\eps$.
\end{corollary}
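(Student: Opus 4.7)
The strategy is to take $\tilde{\Phi}=\Phi_{k}$ for sufficiently large $k$, where $\Phi_{k}$ is the approximating sequence of measure preserving $\C^\infty$-diffeomorphisms from the proof of Lemma~\ref{base plus}. All three requirements on $\tilde{\Phi}$ (smooth, measure preserving, identity near $\partial Q$, equal to $\Phi$ on $K$, $\eps$-close to $\Phi$) will then follow from a single choice of $k$; the smoothness, measure preservation, and boundary behavior are built into $\Phi_{k}$ via Corollary~\ref{corr1} and the iterative construction, so the only thing that must be arranged by the choice of $k$ is the pair of conditions $\tilde{\Phi}=\Phi$ on $K$ and $d(\Phi,\tilde{\Phi})<\eps$.

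For the first condition, I use the observation highlighted before the corollary: since $\Phi$ agrees with $\Phi_{k}$ on $Q\setminus\mathcal{Q}_{k}$, it suffices to choose $k$ so large that $K\cap\mathcal{Q}_{k}=\emptyset$. Such a $k$ exists by a standard compactness argument: the sets $K\cap\mathcal{Q}_{k}$ form a decreasing sequence of compact subsets of $Q$ with intersection
\[
\bigcap_{k=1}^{\infty}(K\cap\mathcal{Q}_{k}) \;=\; K\cap A \;=\; \emptyset,
\]
so by the finite intersection property $K\cap\mathcal{Q}_{k}=\emptyset$ for all sufficiently large $k$.

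For the second condition, I use that $\Phi_{k}\to\Phi$ in the uniform metric $d$, which was proved in the course of Lemma~\ref{base} (and reused in Lemma~\ref{base plus}) via the estimate $d(\Phi_{k},\Phi_{m})\leq 2\sqrt{n}\,\alpha_{k-1}$ for all $m\geq k$. Passing to the limit $m\to\infty$ gives $d(\Phi_{k},\Phi)\leq 2\sqrt{n}\,\alpha_{k-1}\to 0$, so $d(\Phi_{k},\Phi)<\eps$ for all sufficiently large $k$.

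Choosing $k$ large enough that both conditions hold simultaneously and setting $\tilde{\Phi}=\Phi_{k}$ completes the proof. There is no real obstacle here: the work has already been done in Lemma~\ref{base plus}, and the corollary is essentially a repackaging of its construction together with the elementary compactness argument that $K$, being compact and disjoint from the Cantor set $A$, is disjoint from some approximating stage $\mathcal{Q}_{k}$.
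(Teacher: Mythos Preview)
Your proposal is correct and follows essentially the same approach as the paper: the paper's proof (given in the paragraph immediately preceding the corollary) likewise sets $\tilde{\Phi}=\Phi_k$ for $k$ large enough that $K\subset Q\setminus\mathcal{Q}_k$ and $d(\Phi_k,\Phi)<\eps$. Your version simply spells out the compactness argument for the existence of such $k$ and the passage to the limit in the Cauchy estimate a bit more explicitly.
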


Consider the situation presented on Figure 4. The complement of a finite number of disjoint  cubes in a ball is obviously smoothly diffeomorphic to the analogous complement of the same number of disjoint cubes in an ellipsoid; by a procedure similar to the one on Figure 1 one can construct a diffeomorphism that rearranges these cubes, together with their small neighborhoods, in a prescribed way, and which is an affine mapping near the boundary of the ball. Then, Corollary 3.2 immediately yields the following result, which will be used in the proof of Theorem 1.4.

\begin{corollary}
\label{corr2}
Let $\mathcal{Q}=\{Q_1,\ldots,Q_k\}$ be a finite family of identical, disjoint, closed cubes of edge length $q$, in an $n$-dimensional ball $B$.
Let $E$ be an $n$-dimensional ellipsoid, $|E|=|B|$, and let $\tilde{\mathcal{Q}}=\{\tilde{Q}_1,\ldots,\tilde{Q}_k\}$
be a family of identical, disjoint, closed cubes
of edge $q$, in $E$. Then there exists a measure preserving $\C^\infty$-diffeomorphism $\Phi:\bar{B}\to \bar{E}$, linear on the boundary and its neighborhood,
such that $\Phi(Q_j)=\tilde{Q}_j$ for $j=1,2,\ldots,k$ and $\Phi$ is a translation in a neighborhood of each cube $Q_j$.
\end{corollary}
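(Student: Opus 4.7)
The strategy is to first construct a $C^\infty$-diffeomorphism $\Psi:\bar B\to\bar E$ realising the prescribed rigid behaviour (affine near $\partial B$, a translation near each cube $Q_j$), and then to correct it in the ``middle region'' via Corollary~\ref{MP}. The key point is that Corollary~\ref{MP} only alters a diffeomorphism away from the boundary of its domain, so applying it on the complement of the cubes inside $\bar B$ will automatically preserve both the prescribed translations near the cubes and the affine behaviour near $\partial B$.

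Because $|B|=|E|$, an appropriate composition of translations, rotations and diagonal rescalings yields a volume preserving (hence orientation preserving, with $\det=1$ linear part) affine map $L:\bbbr^n\to\bbbr^n$ with $L(\bar B)=\bar E$. Let $v_j$ denote the translation vector with $Q_j+v_j=\tilde Q_j$, and choose $\delta>0$ so small that the inflated cubes $N_j:=Q_j+[-\delta,\delta]^n$ are pairwise disjoint and compactly contained in $\inter B$, while the translates $\tilde N_j:=N_j+v_j$ are pairwise disjoint and compactly contained in $\inter E$; let $N_j^-$ be a slightly smaller concentric cube still containing $Q_j$ in its interior. By a procedure analogous to the cube rearrangement on Figure~\ref{fig:cuberearr} --- using an isotopy of $\bar E$ compactly supported in $\inter E$ to move each parallelepiped $L(\overline{N_j^-})$ onto the cube $\tilde N_j^-$ along pairwise disjoint tubular corridors, composing with $L$, and then locally modifying the result inside each annular region $N_j\setminus\inter N_j^-$ so as to turn it into a pure translation on a neighbourhood of $\overline{N_j^-}$ --- one obtains a $C^\infty$-diffeomorphism $\Psi:\bar B\to\bar E$ such that $\Psi=L$ near $\partial B$ and $\Psi(x)=x+v_j$ on a neighbourhood of $\overline{N_j^-}$ for each $j$. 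Consequently $J_\Psi\equiv 1$ in a neighbourhood of $\partial B$ and of each $\overline{N_j^-}$.

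Finally, set $\Omega:=\inter B\setminus\bigcup_j\overline{N_j^-}$ and $\Omega':=\inter E\setminus\bigcup_j\overline{\tilde N_j^-}$. Both are bounded, open, connected (for $n\ge 2$), of equal volume, and $\Psi$ restricts to a $C^\infty$-diffeomorphism $\bar\Omega\to\bar\Omega'$ with Jacobian $1$ in a neighbourhood of $\partial\Omega$. Corollary~\ref{MP} thus furnishes a measure preserving $C^\infty$-diffeomorphism $\tilde\Phi:\bar\Omega\to\bar\Omega'$ coinciding with $\Psi$ near $\partial\Omega$. Defining $\Phi:=\tilde\Phi$ on $\bar\Omega$ and $\Phi(x):=x+v_j$ on $\overline{N_j^-}$, the two pieces glue smoothly across each $\partial N_j^-$ (both coincide with $\Psi$ there), and the resulting $\Phi:\bar B\to\bar E$ is a measure preserving $C^\infty$-diffeomorphism, equal to the affine map $L$ near $\partial B$, and a translation by $v_j$ in a neighbourhood of each $Q_j$, with $\Phi(Q_j)=\tilde Q_j$. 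The main technical obstacle is the construction of $\Psi$ --- smoothly interpolating the affine boundary datum $L$ with the $k$ prescribed translations while remaining a diffeomorphism throughout $\bar B$ --- which amounts to a standard isotopy extension argument that the paper only sketches through analogy with Figure~\ref{fig:cuberearr}.
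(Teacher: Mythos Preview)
Your proposal is correct and follows essentially the same route as the paper: construct a $C^\infty$-diffeomorphism $\bar B\to\bar E$ that is affine (hence $J=1$) near $\partial B$ and a translation near each cube, then invoke Corollary~\ref{MP} on the connected complement $\Omega=\inter B\setminus\bigcup_j\overline{N_j^-}$ so that the correction to a measure preserving map leaves both the affine boundary behaviour and the prescribed translations intact. The paper's text before the statement of Corollary~\ref{corr2} is exactly this argument in compressed form, and your remark that the only genuinely nontrivial step is the isotopy-type construction of $\Psi$ (which the paper handles by analogy with Figure~\ref{fig:cuberearr}) is accurate.
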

\begin{figure}[h]
\begin{picture}(0,0)%
\includegraphics{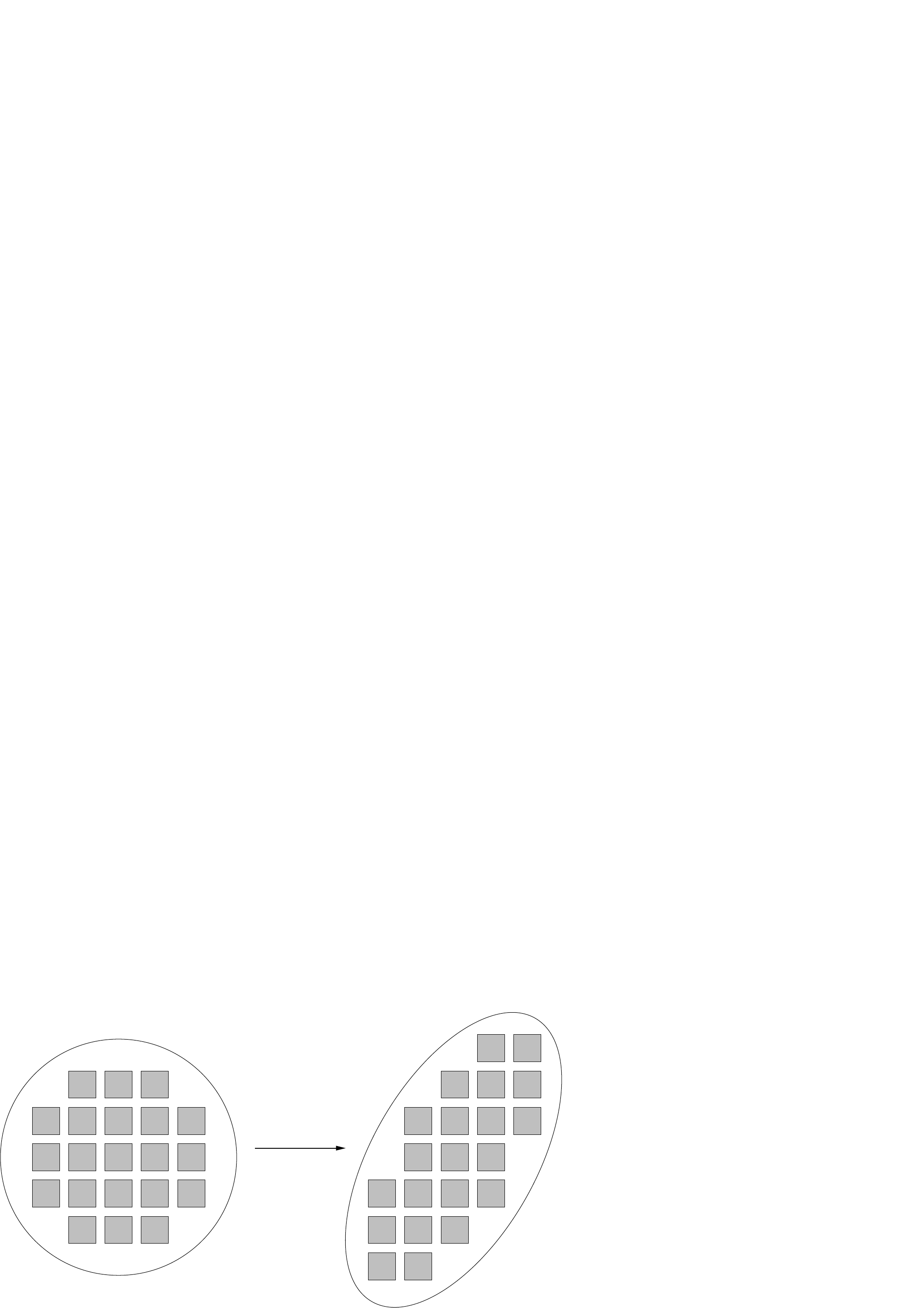}%
\end{picture}%
\setlength{\unitlength}{2565sp}%
\begingroup\makeatletter\ifx\SetFigFont\undefined%
\gdef\SetFigFont#1#2#3#4#5{%
  \reset@font\fontsize{#1}{#2pt}%
  \fontfamily{#3}\fontseries{#4}\fontshape{#5}%
  \selectfont}%
\fi\endgroup%
\begin{picture}(9283,4882)(1193,-7796)
\put(2476,-4186){\makebox(0,0)[lb]{\smash{{\SetFigFont{10}{12.0}{\familydefault}{\mddefault}{\updefault}{\color[rgb]{0,0,0}1}%
}}}}
\put(3001,-5386){\makebox(0,0)[lb]{\smash{{\SetFigFont{10}{12.0}{\familydefault}{\mddefault}{\updefault}{\color[rgb]{0,0,0}11}%
}}}}
\put(1801,-5986){\makebox(0,0)[lb]{\smash{{\SetFigFont{10}{12.0}{\familydefault}{\mddefault}{\updefault}{\color[rgb]{0,0,0}14}%
}}}}
\put(3076,-4186){\makebox(0,0)[lb]{\smash{{\SetFigFont{10}{12.0}{\familydefault}{\mddefault}{\updefault}{\color[rgb]{0,0,0}2}%
}}}}
\put(3676,-4186){\makebox(0,0)[lb]{\smash{{\SetFigFont{10}{12.0}{\familydefault}{\mddefault}{\updefault}{\color[rgb]{0,0,0}3}%
}}}}
\put(1876,-4786){\makebox(0,0)[lb]{\smash{{\SetFigFont{10}{12.0}{\familydefault}{\mddefault}{\updefault}{\color[rgb]{0,0,0}4}%
}}}}
\put(2476,-4786){\makebox(0,0)[lb]{\smash{{\SetFigFont{10}{12.0}{\familydefault}{\mddefault}{\updefault}{\color[rgb]{0,0,0}5}%
}}}}
\put(3076,-4786){\makebox(0,0)[lb]{\smash{{\SetFigFont{10}{12.0}{\familydefault}{\mddefault}{\updefault}{\color[rgb]{0,0,0}6}%
}}}}
\put(3676,-4786){\makebox(0,0)[lb]{\smash{{\SetFigFont{10}{12.0}{\familydefault}{\mddefault}{\updefault}{\color[rgb]{0,0,0}7}%
}}}}
\put(4276,-4786){\makebox(0,0)[lb]{\smash{{\SetFigFont{10}{12.0}{\familydefault}{\mddefault}{\updefault}{\color[rgb]{0,0,0}8}%
}}}}
\put(1876,-5386){\makebox(0,0)[lb]{\smash{{\SetFigFont{10}{12.0}{\familydefault}{\mddefault}{\updefault}{\color[rgb]{0,0,0}9}%
}}}}
\put(2401,-5386){\makebox(0,0)[lb]{\smash{{\SetFigFont{10}{12.0}{\familydefault}{\mddefault}{\updefault}{\color[rgb]{0,0,0}10}%
}}}}
\put(3601,-5386){\makebox(0,0)[lb]{\smash{{\SetFigFont{10}{12.0}{\familydefault}{\mddefault}{\updefault}{\color[rgb]{0,0,0}12}%
}}}}
\put(4201,-5386){\makebox(0,0)[lb]{\smash{{\SetFigFont{10}{12.0}{\familydefault}{\mddefault}{\updefault}{\color[rgb]{0,0,0}13}%
}}}}
\put(2401,-5986){\makebox(0,0)[lb]{\smash{{\SetFigFont{10}{12.0}{\familydefault}{\mddefault}{\updefault}{\color[rgb]{0,0,0}15}%
}}}}
\put(3001,-5986){\makebox(0,0)[lb]{\smash{{\SetFigFont{10}{12.0}{\familydefault}{\mddefault}{\updefault}{\color[rgb]{0,0,0}16}%
}}}}
\put(3601,-5986){\makebox(0,0)[lb]{\smash{{\SetFigFont{10}{12.0}{\familydefault}{\mddefault}{\updefault}{\color[rgb]{0,0,0}17}%
}}}}
\put(4201,-5986){\makebox(0,0)[lb]{\smash{{\SetFigFont{10}{12.0}{\familydefault}{\mddefault}{\updefault}{\color[rgb]{0,0,0}18}%
}}}}
\put(2401,-6586){\makebox(0,0)[lb]{\smash{{\SetFigFont{10}{12.0}{\familydefault}{\mddefault}{\updefault}{\color[rgb]{0,0,0}19}%
}}}}
\put(3001,-6586){\makebox(0,0)[lb]{\smash{{\SetFigFont{10}{12.0}{\familydefault}{\mddefault}{\updefault}{\color[rgb]{0,0,0}20}%
}}}}
\put(3601,-6586){\makebox(0,0)[lb]{\smash{{\SetFigFont{10}{12.0}{\familydefault}{\mddefault}{\updefault}{\color[rgb]{0,0,0}21}%
}}}}
\put(9226,-3586){\makebox(0,0)[lb]{\smash{{\SetFigFont{10}{12.0}{\familydefault}{\mddefault}{\updefault}{\color[rgb]{0,0,0}1}%
}}}}
\put(9826,-3586){\makebox(0,0)[lb]{\smash{{\SetFigFont{10}{12.0}{\familydefault}{\mddefault}{\updefault}{\color[rgb]{0,0,0}2}%
}}}}
\put(8626,-4186){\makebox(0,0)[lb]{\smash{{\SetFigFont{10}{12.0}{\familydefault}{\mddefault}{\updefault}{\color[rgb]{0,0,0}3}%
}}}}
\put(9226,-4186){\makebox(0,0)[lb]{\smash{{\SetFigFont{10}{12.0}{\familydefault}{\mddefault}{\updefault}{\color[rgb]{0,0,0}4}%
}}}}
\put(9826,-4186){\makebox(0,0)[lb]{\smash{{\SetFigFont{10}{12.0}{\familydefault}{\mddefault}{\updefault}{\color[rgb]{0,0,0}5}%
}}}}
\put(8026,-4786){\makebox(0,0)[lb]{\smash{{\SetFigFont{10}{12.0}{\familydefault}{\mddefault}{\updefault}{\color[rgb]{0,0,0}6}%
}}}}
\put(8626,-4786){\makebox(0,0)[lb]{\smash{{\SetFigFont{10}{12.0}{\familydefault}{\mddefault}{\updefault}{\color[rgb]{0,0,0}7}%
}}}}
\put(9226,-4786){\makebox(0,0)[lb]{\smash{{\SetFigFont{10}{12.0}{\familydefault}{\mddefault}{\updefault}{\color[rgb]{0,0,0}8}%
}}}}
\put(9826,-4786){\makebox(0,0)[lb]{\smash{{\SetFigFont{10}{12.0}{\familydefault}{\mddefault}{\updefault}{\color[rgb]{0,0,0}9}%
}}}}
\put(7951,-5386){\makebox(0,0)[lb]{\smash{{\SetFigFont{10}{12.0}{\familydefault}{\mddefault}{\updefault}{\color[rgb]{0,0,0}10}%
}}}}
\put(8551,-5386){\makebox(0,0)[lb]{\smash{{\SetFigFont{10}{12.0}{\familydefault}{\mddefault}{\updefault}{\color[rgb]{0,0,0}11}%
}}}}
\put(9151,-5386){\makebox(0,0)[lb]{\smash{{\SetFigFont{10}{12.0}{\familydefault}{\mddefault}{\updefault}{\color[rgb]{0,0,0}12}%
}}}}
\put(7351,-5986){\makebox(0,0)[lb]{\smash{{\SetFigFont{10}{12.0}{\familydefault}{\mddefault}{\updefault}{\color[rgb]{0,0,0}13}%
}}}}
\put(7951,-5986){\makebox(0,0)[lb]{\smash{{\SetFigFont{10}{12.0}{\familydefault}{\mddefault}{\updefault}{\color[rgb]{0,0,0}14}%
}}}}
\put(8551,-5986){\makebox(0,0)[lb]{\smash{{\SetFigFont{10}{12.0}{\familydefault}{\mddefault}{\updefault}{\color[rgb]{0,0,0}15}%
}}}}
\put(9151,-5986){\makebox(0,0)[lb]{\smash{{\SetFigFont{10}{12.0}{\familydefault}{\mddefault}{\updefault}{\color[rgb]{0,0,0}16}%
}}}}
\put(7351,-6586){\makebox(0,0)[lb]{\smash{{\SetFigFont{10}{12.0}{\familydefault}{\mddefault}{\updefault}{\color[rgb]{0,0,0}17}%
}}}}
\put(7951,-6586){\makebox(0,0)[lb]{\smash{{\SetFigFont{10}{12.0}{\familydefault}{\mddefault}{\updefault}{\color[rgb]{0,0,0}18}%
}}}}
\put(8551,-6586){\makebox(0,0)[lb]{\smash{{\SetFigFont{10}{12.0}{\familydefault}{\mddefault}{\updefault}{\color[rgb]{0,0,0}19}%
}}}}
\put(7351,-7186){\makebox(0,0)[lb]{\smash{{\SetFigFont{10}{12.0}{\familydefault}{\mddefault}{\updefault}{\color[rgb]{0,0,0}20}%
}}}}
\put(7951,-7186){\makebox(0,0)[lb]{\smash{{\SetFigFont{10}{12.0}{\familydefault}{\mddefault}{\updefault}{\color[rgb]{0,0,0}21}%
}}}}
\end{picture}
%----------------------------
\caption{Rearrangement of a sufficiently large family of cubes between a ball and an ellipsoid.}
\label{circtoellipse3}
\end{figure}

In order to be able to apply Corollary~\ref{corr2} in the iterative procedure used in the proof of Theorem~\ref{main} we need to be able to correct measure preserving
diffeomorphisms in a way that they remain measure preserving and they map
a certain finite but large family of disjoint small balls onto ellipsoids of equal volume. This can be done thanks to Lemma~\ref{goodapprox} whose proof is yet another application of
Corollary~\ref{MP}. The lemma shows that if \mbox{$\Phi:B(x_o,r)\to\bbbr^{n}$,} $B(x_{o},r)\subset \bbbr^{n}$, is a measure preserving diffeomorphism of class $C^{\infty}$ that is sufficiently
well approximated by its tangent map $T(x)=\Phi(x_{o})+D\Phi(x_{o})(x-x_{o})$, then there exists another measure preserving diffeomorphism $\tilde{\Phi}:B(x_{o},r)\to\bbbr^{n}$,
that coincides with $\Phi$ near $\partial B(x_{o},r)$ and equals $T$ on $B(x_{o},r/2)$. The actual statement is quite technical and it requires some notation.

For a diffeomorphism $\Phi:\Omega\to\bbbr^{n}$, $\Omega\subset \bbbr^{n}$, of class $C^{\infty}$ we define
\begin{eqnarray*}
\|D\Phi\|_{\Omega}&=&\sup_{x\in\Omega}\|D\Phi(x)\| =\sup_{x\in\Omega} \sup_{|\xi|=1}\left|D\Phi(x)\xi\right|,\\
\|(D\Phi)^{-1}\|_{\Omega}&=&\sup_{x\in\Omega}\|(D\Phi)^{-1}(x)\|=\sup_{x\in\Omega}\sup_{|\xi|=1}\left| (D\Phi(x))^{-1}\xi\right|\\
\|D^{2}\Phi\|_{\Omega}&=&\sup_{x\in\Omega}\|D^{2}\Phi(x)\|=\sup_{x\in\Omega}\sup_{|\xi|=|\eta|=1}\left|\sum_{i,j=1}^{n} \frac{\partial^{2}\Phi}{\partial x_{i}\partial x_{j}}(x)\xi_{i}\eta_{j}\right|.
\end{eqnarray*}
If $B(x_{o},r)\subset\Omega$ and  $x,y\in B(x_{o},r)$, then
$$\left| \Phi(x)-\Phi(y)\right|\leq \|D\Phi\|_{\Omega}|x-y|\quad \text{ and }\quad
\left| \Phi(x)-T(x) \right|\leq \|D^{2}\Phi\|_{\Omega}|x-x_{o}|^{2}.$$ 
The last inequality follows from Taylor's formula:
$$
\Phi(x_{o}+h)-\Phi(x_{o})-D\Phi(x_{o})h=\sum_{i,j=1}^{n}h_{i}h_{j}\int_{0}^{1}(1-t)\frac{\partial^{2}\Phi}{\partial x_{i}\partial x_{j}}(x_{o}+th)\, dt.
$$
We start with a simple topological observation.
\begin{lemma}\label{lemma:top}
Assume $B$ is a closed ball in $\bbbr^n$ centered at $x$, and $F,G\colon B\to\bbbr^n$ are two homeomorphisms (onto their respective images). If $F(x)=G(x)$ and $F(\partial B)=G(\partial B)$, then $F(B)=G(B)$.
\end{lemma}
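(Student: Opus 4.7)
The plan is to identify $F(B)$ and $G(B)$ as closures of connected components of the common complement $\bbbr^n\setminus F(\partial B)=\bbbr^n\setminus G(\partial B)$, and then use the shared interior point $F(x)=G(x)$ to force these components to coincide. The backbone of the argument is Brouwer's invariance of domain; everything else is a clopen/connectedness bookkeeping exercise.

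First, I would apply invariance of domain to the restriction $F|_{\inter B}$, which is a continuous injection from an open subset of $\bbbr^n$ into $\bbbr^n$; this yields that $F(\inter B)$ is open in $\bbbr^n$. The set $F(\inter B)$ is clearly connected (continuous image of the connected set $\inter B$) and bounded (it sits inside the compact set $F(B)$). Next I would check that $F(\inter B)$ is relatively closed in $\bbbr^n\setminus F(\partial B)$: if $F(y_k)\to p$ with $y_k\in \inter B$ and $p\notin F(\partial B)$, then compactness of $B$ lets us extract a subsequence $y_k\to y\in B$; continuity gives $F(y)=p$, and since $p\notin F(\partial B)$ we must have $y\in\inter B$, hence $p\in F(\inter B)$. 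Being simultaneously open and closed in $\bbbr^n\setminus F(\partial B)$ and connected, $F(\inter B)$ is one full connected component of that complement.

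Running the identical argument for $G$, the set $G(\inter B)$ is a connected component of $\bbbr^n\setminus G(\partial B)=\bbbr^n\setminus F(\partial B)$. Because $B$ is centered at $x$, we have $x\in\inter B$, so the point $F(x)=G(x)$ belongs to both $F(\inter B)$ and $G(\inter B)$; two components of the same space sharing a point must be equal, so $F(\inter B)=G(\inter B)$. Appending the common image $F(\partial B)=G(\partial B)$ of the boundary yields $F(B)=G(B)$.

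I do not foresee a genuine obstacle here; the only nontrivial ingredient is invariance of domain, which supplies the openness of $F(\inter B)$ and $G(\inter B)$, and the hypothesis $F(x)=G(x)$ is precisely what is needed to match up the two otherwise a priori distinct components of the complement of the common $(n-1)$-sphere $F(\partial B)$. In particular, one avoids invoking the full Jordan--Brouwer separation theorem, since the shared interior point selects the correct component directly.
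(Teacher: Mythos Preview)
Your argument is correct, but the paper takes a different and shorter route. Rather than identifying $F(\inter B)$ and $G(\inter B)$ as connected components of $\bbbr^n\setminus F(\partial B)$, the paper argues by contradiction with a path: if some $F(y)\notin G(B)$, then the arc $F([x,y])$ starts at $F(x)=G(x)\in G(\inter B)$ and ends outside $G(B)$, so it must meet $G(\partial B)=F(\partial B)$ at some $F(z)$ with $z\in(x,y)$; injectivity of $F$ then forces $z\in\partial B$, contradicting $z\in\inter B$. Both proofs ultimately lean on invariance of domain (in the paper's version, implicitly, to know that leaving $G(B)$ forces a crossing of $G(\partial B)$ rather than merely of the topological boundary of $G(B)$). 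Your component argument is a bit more explicit about where invariance of domain enters and gives a clean structural description of $F(B)$ and $G(B)$; the paper's path argument is more economical and avoids the clopen bookkeeping.
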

\begin{proof}
Assume there is a point $y\in B$ such that $F(y)\not\in G(B)$. Then the image $F([x,y])$ of the line segment $[x,y]$ intersects $G(\partial B)=F(\partial B)$, thus there exists $z\in (x,y)$ and $w\in\partial B$ such that $F(z)=F(w)$, which implies that $z=w$. However, $z$ is an interior point of $B$, which yields a contradiction.
\end{proof}
\begin{lemma}
\label{goodapprox}
Let $\Phi:\Omega\to\bbbr^{n}$, $\Omega\subset\bbbr^{n}$, be a measure preserving diffeomorphism of class $C^{\infty}$ such that
$$
M=\|D\Phi\|_{\Omega}+\|(D\Phi)^{-1}\|_{\Omega}+\|D^{2}\Phi\|_{\Omega}<\infty.
$$
Let $B=B(x_{o},r) \Subset \Omega$ and $D=\overline{B}(x_{o},r/2)$.
If
$$
r<\left(10 (M+1)^{2}2^{\ell}\right)^{-1}\quad\text{ for some }\ell\in \bbbn,
$$
then
\begin{itemize}
\item[(a)] $\diam \Phi(B)<2^{-\ell}$,
\item[(b)] $T(D)\subset \Phi(B)$, where $T(x)=\Phi(x_{o})+D\Phi(x_{o})(x-x_{o})$,
\item[(c)] there is a measure preserving $\C^\infty$-diffeomorphism $\tilde{\Phi}$ which coincides with $\Phi$ on
$\Omega\setminus B(x_{o},r-\eps)$ and coincides with $T$ on $B(x_{o},\frac{r}{2}+\eps)$ for some $\eps=\eps(r)>0$ .
\end{itemize}
\end{lemma}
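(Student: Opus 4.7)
The proof naturally splits along the three claims. \textbf{Part (a)} is immediate from the mean value estimate: $\diam\Phi(B)\le 2r\|D\Phi\|_{\Omega}\le 2Mr$, and the hypothesis $r<(10(M+1)^{2}2^{\ell})^{-1}$ directly forces $2Mr<2^{-\ell}$.

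\textbf{Part (b)} is a short topological argument. Since $T^{-1}$ is $M$-Lipschitz (its linear part is $(D\Phi(x_{o}))^{-1}$ of norm at most $M$), we have $\dist(T(D),T(\partial B))\ge r/(2M)$. Taylor's formula recorded in the excerpt gives $|\Phi(x)-T(x)|\le \|D^{2}\Phi\|_{\Omega}|x-x_{o}|^{2}\le Mr^{2}$ on $\overline B$. The hypothesis on $r$ implies in particular $Mr^{2}<r/(2M)$, hence $T(D)\cap\Phi(\partial B)=\emptyset$. Since $T(D)$ is connected, contains $T(x_{o})=\Phi(x_{o})\in\Phi(\inter B)$, and $\Phi(\inter B)$ is one of the two components of $\bbbr^{n}\setminus\Phi(\partial B)$ (invariance of domain / Jordan--Brouwer), we conclude $T(D)\subset\Phi(B)$.

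\textbf{Part (c)} is the main work. The plan is to first produce an auxiliary smooth diffeomorphism $\Psi\colon B\to\Phi(B)$ that equals $\Phi$ near $\partial B$ and equals $T$ on a slightly enlarged $D$, and then to correct its Jacobian on the intermediate annulus using Corollary~\ref{MP}. Set $\delta=r/4$ and pick a radial cutoff $\eta(x)=\chi(|x-x_{o}|)$ with $\chi\in\C^{\infty}([0,\infty);[0,1])$, $\chi\equiv 1$ on $[0,r/2+\delta]$, $\chi\equiv 0$ on $[r-\delta,\infty)$, and $|\chi'|\le C/\delta$. Define
$$
\Psi(x)=\eta(x)\,T(x)+\bigl(1-\eta(x)\bigr)\Phi(x).
$$
Then $\Psi\equiv T$ on $\overline B(x_{o},r/2+\delta)$ and $\Psi\equiv\Phi$ on $B\setminus B(x_{o},r-\delta)$. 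Using $DT\equiv D\Phi(x_{o})$, the estimate $\|D\Phi-D\Phi(x_{o})\|\le Mr$, and $|T-\Phi|\le Mr^{2}$, a direct computation gives
$$
\|D\Psi-D\Phi(x_{o})\|\le Mr+\tfrac{C}{\delta}Mr^{2}=O(Mr).
$$
The hypothesis on $r$ makes this smaller than $1/M\ge\|(D\Phi(x_{o}))^{-1}\|^{-1}$, so $D\Psi$ is everywhere invertible; the integral mean value theorem on the convex domain $B$ then makes $\Psi$ injective. Since $\Psi=\Phi$ on $\partial B$ and $\Psi(x_{o})=\Phi(x_{o})$, Lemma~\ref{lemma:top} yields $\Psi(B)=\Phi(B)$, so $\Psi$ is a $\C^{\infty}$ diffeomorphism of $B$ onto $\Phi(B)$.

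To promote $\Psi$ to a measure preserving map, apply Corollary~\ref{MP} to the open annulus $U=\{r/2+\delta/2<|x-x_{o}|<r-\delta/2\}$ and its image $\Psi(U)=\Phi(B(x_{o},r-\delta/2))\setminus T(\overline B(x_{o},r/2+\delta/2))$. These have equal volume because $\Phi$ is measure preserving and $|\det D\Phi(x_{o})|=1$. Moreover $\Psi$ coincides with $T$ on a neighborhood of the inner component of $\partial U$ and with $\Phi$ on a neighborhood of its outer component, so $J_{\Psi}\equiv 1$ near $\partial U$. Corollary~\ref{MP} then provides a $\C^{\infty}$ diffeomorphism $\tilde\Psi\colon U\to\Psi(U)$ with $J_{\tilde\Psi}\equiv 1$ and $\tilde\Psi=\Psi$ near $\partial U$. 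Gluing $T$ on $\overline B(x_{o},r/2+\delta/2)$, $\tilde\Psi$ on $U$, and $\Phi$ on $\Omega\setminus B(x_{o},r-\delta/2)$ produces the desired $\tilde\Phi$, and any $0<\epsilon<\delta/2$ validates the statement. The principal obstacle is verifying that $\Psi$ is a genuine (not merely local) diffeomorphism, which forces us to balance the quadratic Taylor remainder $Mr^{2}$ against the $1/\delta$ blow-up of the cutoff derivative; this is precisely why the smallness condition on $r$ is quadratic in $M$.
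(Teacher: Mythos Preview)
Your argument is essentially the paper's: interpolate radially between $T$ and $\Phi$, show the interpolant is a diffeomorphism because its derivative stays within $1/M$ of the invertible $D\Phi(x_{o})$, invoke Lemma~\ref{lemma:top} to identify the image with $\Phi(B)$, and then repair the Jacobian on the transition annulus via Corollary~\ref{MP}. The only organizational difference is that the paper obtains (b) as a byproduct of this construction (since $G=T$ on a neighborhood of $D$ and $G(\overline B)=\Phi(\overline B)$), whereas you give a separate Jordan--Brouwer argument; both are fine.

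There is one slip to fix: with $\delta=r/4$ the transition shell $\{\,r/2+\delta<|x-x_{o}|<r-\delta\,\}$ collapses to the single sphere of radius $3r/4$, so no smooth $\chi$ with your stated properties exists, and the bound $|\chi'|\le C/\delta$ becomes vacuous. Choose instead any $\delta<r/4$ (the paper takes the fixed profile $\phi$ with $\phi\equiv 0$ on $[0,3/5]$, $\phi\equiv 1$ on $[4/5,1]$, $\|\phi'\|_{\infty}<9$); then the derivative estimate reads $\|D\Psi-D\Phi(x_{o})\|\le 10Mr$, and the hypothesis $10(M+1)^{2}2^{\ell}r<1$ (with $\ell\ge 1$) gives $10M^{2}r<1/2$, which is exactly what your injectivity and invertibility steps require. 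With this correction your proof goes through and matches the paper's.
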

\begin{proof}
By translating the domain we may assume that $x_o=0$. That will slightly simplify the notation.
For any $x,y\in \overline{B}$ we have
$$
\left| \Phi(x)-\Phi(y) \right|\leq \|D\Phi\|_{\Omega}|x-y|\leq M\cdot 2r<2^{-\ell},
$$
which implies (a).

To establish (c), fix a smooth, non-decreasing function $\phi:\bbbr\to[0,1]$, $\|\phi'\|_{\infty}<9$, such that
$\phi(t)=0$ for $t\leq 3/5$ and $\phi(t)=1$ for $t\geq 4/5$, and define
$$
G(x)=T(x)+\phi(|x|/r)(\Phi(x)-T(x)):=T(x)+L(x).
$$
Observe that $G$ coincides with $\Phi$ near $\partial B(0,r)$ and coincides with $T$ in a neighborhood of~$D$. We will prove that $G$ is a diffeomorphism.

For $x\in\overline{B}$ we have
\begin{equation*}
\begin{split}
\|DL(x)\|&\leq \frac{1}{r}\phi'\left(\frac{|x|}{r}\right)| \Phi(x)-T(x)|+\|D\Phi(x)-D\Phi(0)\|\\
&\leq \frac{9}{r}\|D^{2}\Phi\|_{\Omega}|x|^{2}+\|D^{2}\Phi\|_{\Omega}|x|\\
&\leq 10Mr.
\end{split}
\end{equation*}
Hence, for $x,y\in \overline{B}$,
\begin{equation}\label{eq:inject}
\begin{split}
|G(x)-G(y)|&\geq |T(x)-T(y)|-|L(x)-L(y)|\\
&\geq |D\Phi(0)(x-y)|-\sup_{z\in\overline{B}}\|DL(z)\| |x-y|\\
&\geq |D\Phi(0)(x-y)|-10Mr |x-y|\\
&\geq |D\Phi(0)(x-y)|-10M r \|(D\Phi)^{-1}\|_{\Omega}|D\Phi(0)(x-y)|\\
&\geq \left(1-10M^{2}r\right) |D\Phi(0)(x-y)|\\
&\geq \frac{1}{2}|D\Phi(0)(x-y)|.
\end{split}
\end{equation}
This estimate shows that $G$ is injective, which, by compactness of $\overline{B}$, proves that $G$ is a homeomorphism on $\overline{B}$. 
As we noted before, $G$ coincides with $\Phi$ in the neighborhood of $\partial B$, also $G(x_o)=\Phi(x_o)$. 
Applying Lemma \ref{lemma:top} to these two homeomorphisms we obtain that $G(\overline{B})=\Phi(\overline{B})$, 
which in turn immediately implies (b), since $G=T$ in the neighborhood of $D$.

In fact, the estimate \eqref{eq:inject} implies more -- that $G$ is a diffeomorphism. To prove that, it suffices to show non-degeneracy of $DG$ at points of $B$ (in all other points $G$ coincides with $\Phi$); assume $x\in B$. For an arbitrary $v\in \bbbr^n$ and sufficiently small $\tau>0$ we have $y=x+\tau v\in B$. Then, by \eqref{eq:inject},
\begin{equation*}\begin{split}
\frac{1}{2}|D\Phi(0)(\tau v)|= \frac{1}{2}|D\Phi(0)(x-y)|&\leq |G(x)-G(y)|\\
&=|DG(x)(y-x)+o(|x-y|)|=|\tau DG(x)v +o(\tau)|.
\end{split}\end{equation*}
Dividing both sides by $\tau$ and passing with $\tau$ to $0$ gives $|DG(x)v|\geq \frac{1}{2}|D\Phi(0)(v)|$, and since $D\Phi(0)$ is non-degenerate, so is $DG(x)$.

Note that $G(B)=\Phi(B)$ has the same measure as $B$; also the tangent mapping $T$ is measure preserving,
therefore the measure of $G(D)=T(D)$ is the same as that of $D$.
Thus Corollary~\ref{MP} allows us to change the diffeomorphism $G$ to a measure preserving diffeomorphism $\tilde{\Phi}$, which
satisfies the condition (c).
\end{proof}
A similar argument of interpolating between an affine mapping on a smaller ball and the original diffeomorphism outside a larger one has been used before (and also in the context of Dacorogna-Moser's Theorem) in the paper of M\"uller  and \v{S}ver\'ak \cite{MuellerSverak}.

\section{Proof of Theorem~\ref{main}}
\label{1stAttempt}
The proof of Theorem~\ref{main} is divided into two parts.

In the first part (Section~\ref{first}),
we present a construction of an almost everywhere approximately differentiable homeomorphism $F$ of
$Q$ onto $Q$ that satisfies all the conditions listed in Theorem \ref{main}, except it is not given as a uniform limit of diffeomorphisms.
(We denote the constructed diffeomorphism by $F$ instead of $\Phi$, because we use the homeomorphism $\Phi$ from Lemma~\ref{base plus} and we want to avoid confusion).
The homeomorphism $F$ is constructed as a limit of homeomorphisms $F_k$ in the uniform metric~$d$.

In the second part (Section~\ref{second}), we
prove that for any $k$ there is a measure and orientation preserving $\C^\infty$-diffeomorphism $\Xi_k$, such that $\Xi_k|_{\partial Q}=\id$
and $d(\Xi_k,F_k)<2^{-k}$. Hence $\Xi_k\to F$ and this completes the proof of Theorem~\ref{main}.

\subsection{Constructing $F_k$ and $F$}
\label{first}\mbox{}\\
We take, as the starting point, the homeomorphism $\Phi$  constructed in Lemma~\ref{base plus}: \mbox{$F_1=\Phi$}. We set $C_1=A$.

In the inductive step we assume that we have a measure preserving, almost everywhere approximately differentiable homeomorphism
$F_k:Q\to Q$ and a compact set $C_k$ in the interior of $Q$ such that
\begin{itemize}
\item $C_k$ has positive measure,
\item $F_k$ is a measure preserving homeomorphism which is a
$\C^\infty$-diffeomorphism outside~$C_k$, $F_k|_{\partial Q}=\id$,
\item $F_k$ is approximately differentiable at almost all points of $C_k$ and
\begin{equation*}
\ap DF_{k}(x) = \mathfrak{R}:=
\left[
\begin{array}{ccccc}
1       &      0       &   \ldots   &   0      &     0      \\
0       &      1       &   \ldots   &   0      &     0      \\
\vdots  &   \vdots     &   \ddots   &  \vdots  &    \vdots  \\
0       &      0       &   \ldots   &   1      &     0      \\
0       &      0       &   \ldots   &   0      &    -1      \\
\end{array}
\right]
\qquad
\text{a.e. in $C_k$.}
 \end{equation*}
\end{itemize}

We construct $F_{k+1}$ by modifying $F_k$ in such a way that $F_{k+1}=F_k$ in a neighborhood of~$C_k$ and that there is a compact set $E_{k+1}$
in the interior of $Q\setminus C_k$, such that \mbox{$|E_{k+1}|>2^{-(2n+3)}|Q\setminus C_k|$}
and $F_{k+1}$ has the same properties as the properties of $F_k$ listed above, with $C_{k+1}=C_k\cup E_{k+1}$.
Moreover, the uniform distance between $F_k$ and $F_{k+1}$ satisfies $d(F_k,F_{k+1})<2^{-k+1}$. This guarantees that $F_k$ is a Cauchy sequence
in the metric $d$ and hence it converges to a measure preserving homeomorphism $F$, $F|_{\partial Q}=\id$, by Lemma~\ref{limit meas pres}.
Since $|Q\setminus C_{k+1}|<(1-2^{-(2n+3)})|Q\setminus C_k|$, it follows that $|\bigcup_k C_k|=|Q|$.
As $F=F_k$ on $C_k$, we have that $\ap DF|_{C_k}=\ap DF_k|_{C_k} = \mathfrak{R}$ at almost all points of $C_k$ and hence $\ap DF=\mathfrak{R}$ a.e. in $Q$.

The construction of $F_{k+1}$ will be described in three subsequent stages.

\noindent
{\bf Step 1. Linearization.}
Let $\Omega_k\Subset Q\setminus C_k$ be an open set such that $|\Omega_k|>\frac{1}{2}|Q\setminus C_k|$ and let
$$
M=\Vert D F_k\Vert_{\overline{\Omega}_k} +
\Vert (D F_k)^{-1}\Vert_{\overline{\Omega}_k} +
\Vert D^2 F_k\Vert_{\overline{\Omega}_k}.
$$
Clearly $M<\infty$.

Let $\mathcal{B}=\{ B_{ki}\}_{i=1}^{N_k}$
be a finite family of balls whose closures are disjoint sets inside $\Omega_k$, each of radius less than $(10(M+1)^2 2^k)^{-1}$ and such that
$|\bigcup_{i=1}^{N_k} B_{ki}|>4^{-1}|Q\setminus C_k|$.

Note that $\diam B_{ki}<2^{-k}$ and $\diam F_k(B_{ki})<2^{-k}$ by Lemma~\ref{goodapprox}(a).

We construct $F'_k$ by modifying $F_k$ in each ball $B_{ki}$ according to Lemma~\ref{goodapprox}.
This way $F'_k$ coincides with $F_k$ in a neighborhood of $C_k$ and $F'_k$ is a measure preserving diffeomorphism in $Q\setminus C_k$.
In particular, it is a measure preserving diffeomorphism in $\Omega_k$.

Note that the diffeomorphism $F'_k$ is linear in each ball $D_{ki}=\overline{B}(x_o,r/2)$, where $B(x_o,r)=B_{ki}\in\mathcal{B}$. We also have
$|\bigcup_{i=1}^{N_k} D_{ki}|=2^{-n}|\bigcup_{i=1}^{N_k} B_{kn}|>4^{-1}2^{-n}|Q\setminus C_k|$.

\noindent
{\bf Step 2. Rearranging small cubes.}
In this step we modify $F'_k$ in each ball $D_{ki}$, \mbox{$i=1,2,\ldots,N_k$}, constructed in Step~1, according to Corollary~\ref{corr2}.
The resulting mapping is denoted by $F''_k$.

The ball $D_{ki}$ and the ellipsoid $F'_k(D_{ki})$ have the same measure. Therefore, we can find a finite family of small, identical, disjoint cubes $\mathcal{Q}_{ki}=\{ Q_{ki}^j\}_j$ in $D_{ki}$ such that the cubes in $\mathcal{Q}_{ki}$ cover at least $1/2$ of the measure of $D_{ki}$ -- and another family, of the same number of disjoint cubes isometric to the ones in $\mathcal{Q}_{ki}$, but this time in the ellipsoid $F'_k(D_{ki})$ (one can draw a sufficiently dense grid in $Q$, then choose the cubes defined by the grid, inscribed into $D_{ki}$ and $F'_k(D_{ki})$ and slightly shrink them to make them disjoint, finally elliminate some of them if the numbers of cubes in both sets do not match).

The map $F_k''$ rearranges the cubes in $D_{ki}$ by translations to the cubes in $F'_k(D_{ki})$, as depicted in Figure \ref{circtoellipse3} and described in Corollary~\ref{corr2}.
Note that the total measure of cubes rearranged by $F''_k$ satisfies
$|\bigcup_{ij} Q_{ki}^j|>8^{-1}2^{-n}|Q\setminus C_k|$.

\noindent
{\bf Step 3. Gluing in the basic construction.}
In this last stage we modify $F''_k$ within each of the cubes $Q_{ki}^j\subset D_{ki}$
constructed in Step~2.

Recall that $F''_k$ acts on $Q_{ki}^j\subset D_{ki}$ as a translation.
We modify it by superposing it inside $Q_{ki}^j$ with a properly scaled version of $\Phi$ from Lemma~\ref{base plus}. The resulting map is denoted by $F_{k+1}$.

In each cube $Q_{ki}^j$ there is a Cantor set $A_{ki}^j$ of measure $2^{-n}|Q_{ki}^j|$, on which $F_{k+1}$ acts as a
reflection + translation
(the reflection is inherited from Lemma~\ref{base plus} and the translation is inherited from $F_k''$)
so  $\ap DF_{k+1}=\mathfrak{R}$ a.e. in $A_{ki}^j$.
Note that the rescaled version of $\Phi$ inserted into $Q_{ki}^j$ is a smooth measure preserving diffeomorphism in
$Q_{ki}^j\setminus A_{ki}^j$. Clearly,
$$
\Big|\bigcup_{ij} A_{ki}^j\Big|=2^{-n}\Big|\bigcup_{ij} Q_{ki}^j\Big|>8^{-1}2^{-2n}|Q\setminus C_k|=2^{-(2n+3)}|Q\setminus C_k|.
$$

We set $E_{k+1}=\bigcup_{ij}A_{ki}^j$ and $C_{k+1}=C_k\cup E_{k+1}$. Obviously, $\ap DF_{k+1}=\mathfrak{R}$ a.e. in $E_{k+1}$,
and hence a.e. in $C_{k+1}$. Moreover, $F_{k+1}$ is a smooth measure preserving diffeomorphism on $Q\setminus C_{k+1}$ and $F_{k+1}|_{\partial Q}=\id$.

It follows from our construction that $F_{k+1}=F_k$ in $Q\setminus\bigcup_{i=1}^{N_k} B_{ki}$ and $F_k(B_{ki})=F_{k+1}(B_{ki})$.
Actually,
\begin{equation}
\label{bimbala}
F_{k+1}=F_k
\quad
\text{near the boundary of $W_k=\bigcup_{i=1}^{N_k} \overline{B}_{ki}$.}
\end{equation}

Since $\diam F_k(B_{ki})<2^{-k}$ for $i=1,2,\ldots,N_k$, we have that
$$
\sup_{x\in Q} |F_k(x)-F_{k+1}(x)|<2^{-k}.
$$
Also $F_k^{-1}=F_{k+1}^{-1}$ in $Q\setminus\bigcup_{i=1}^{N_k} F_k(B_{ki})$. Since
$$
F_k^{-1}(F_k(B_{ki}))=F_{k+1}^{-1}(F_k(B_{ki}))=B_{ki}
\quad
\text{and}
\quad
\diam B_{ki}<2^{-k},
$$
it follows that
$$
\sup_{x\in Q} |F_k^{-1}(x)-F_{k+1}^{-1}(x)|<2^{-k}.
$$
Thus $d(F_k,F_{k+1})<2^{-k+1}$, so $F_k$ is a Cauchy sequence in the metric $d$
and hence it converges to a measure preserving homeomorphism $F$ by Lemma~\ref{limit meas pres}.

This completes the first part of the proof of Theorem~\ref{main}.
We have not proven yet that the homeomorphism $F$ can be approximated by measure preserving diffeomorphism.

The homeomorphism $F$ has a strange property: the union of the Cantor sets $A\cup\bigcup_{ijk} A_{ki}^j=\bigcup_k C_k$ is a subset of $Q$ of full measure.
The homeomorphism $F$ is a reflection on $A$, a reflection plus a translation on each of $A_{ki}^j$, and yet, $F$ is an orientation preserving homeomorphism of the cube $Q$.

\subsection{Constructing $\Xi_k$ and finishing the proof}
\label{second}\mbox{}\\
In order to complete the proof, it suffices to show that for any $k$ there is a measure preserving $\C^\infty$-diffeomorphism $\Xi_k$ such that
$\Xi_k=\id$ on $\partial Q$ and $d(F_k,\Xi_k)<2^{-k}$. To this end, it suffices to prove the next lemma, because the diffeomorphism
$\Xi_k=\Xi_{kk}$ will have all required properties.

\begin{lemma}
\label{Le}
For every $k\in\bbbn$ and $\ell=1,2,\ldots,k$ there are measure preserving $\C^\infty$\nobreakdash -\hspace{0pt}diffeomorphisms
$\Xi_{k\ell}$ such that
\begin{equation}
\label{Eq1}
\Xi_{k\ell} = \id
\quad
\text{on}
\quad
\partial Q,
\end{equation}
\begin{equation}
\label{Eq2}
\Xi_{k\ell} = F_\ell
\quad
\text{on}
\quad
\overline{\Omega}_\ell\cup\ldots\cup\overline{\Omega}_k,
\end{equation}
\begin{equation}
\label{Eq3}
d(\Xi_{k\ell},F_\ell)< 2^{-2k+\ell}.
\end{equation}
\end{lemma}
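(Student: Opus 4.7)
The plan is to prove the lemma by upward induction on $\ell$, starting from $\ell=1$ and increasing up to $\ell=k$. For the base case $\ell=1$, observe that $F_1=\Phi$ is the homeomorphism from Lemma~\ref{base plus}, and each $\overline\Omega_j\Subset Q\setminus C_j\subset Q\setminus C_1=Q\setminus A$, so the union $K:=\bigcup_{j=1}^k\overline\Omega_j$ is a compact subset of $Q\setminus A$. Applying Corollary~\ref{000} to $K$ with $\varepsilon=2^{-2k+1}$ then produces $\Xi_{k1}$ satisfying all three required conditions \eqref{Eq1}--\eqref{Eq3}.

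For the inductive step from $\ell$ to $\ell+1$, I would exploit the local structure of the passage $F_\ell\leadsto F_{\ell+1}$ from Section~\ref{first}: the two maps coincide outside the region $W_\ell=\bigcup_i\overline B_{\ell,i}\subset\Omega_\ell$ (in fact already in a neighborhood of $\partial W_\ell$, by equation \eqref{bimbala}), and inside each ball $B_{\ell,i}$ the map $F_{\ell+1}$ is smooth except on the small cubes $Q_{\ell,i}^j\subset D_{\ell,i}$, where it has the form $F''_\ell\circ\tilde\Phi_{\ell,i,j}$ with $\tilde\Phi_{\ell,i,j}$ a rescaled and translated copy of $\Phi$. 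I then define $\Xi_{k,\ell+1}$ piecewise: equal to $\Xi_{k\ell}$ on $Q\setminus W_\ell$, equal to $F_{\ell+1}$ on $W_\ell\setminus\bigcup_{i,j}Q_{\ell,i}^j$, and on each cube $Q_{\ell,i}^j$ equal to $F''_\ell\circ\Psi_{\ell,i,j}$, where $\Psi_{\ell,i,j}$ is the smooth measure preserving diffeomorphism of $Q_{\ell,i}^j$ obtained by applying a rescaled version of Corollary~\ref{000} to $\tilde\Phi_{\ell,i,j}$, chosen so that $\Psi_{\ell,i,j}=\tilde\Phi_{\ell,i,j}$ on the compact set $\bigl(\bigcup_{j'=\ell+1}^k\overline\Omega_{j'}\bigr)\cap Q_{\ell,i}^j$ (which is disjoint from the Cantor set $A_{\ell,i}^j\subset C_{\ell+1}$) and so that $d(\Psi_{\ell,i,j},\tilde\Phi_{\ell,i,j})<2^{-2k+\ell}$.

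Smoothness of the glued map at the two types of interior boundaries is automatic. Near $\partial W_\ell$ the open neighborhood $\Omega_\ell\supset W_\ell$ already lies in $\overline\Omega_\ell$, so by the inductive hypothesis $\Xi_{k\ell}=F_\ell$ there, while \eqref{bimbala} gives $F_\ell=F_{\ell+1}$ in a neighborhood of $\partial W_\ell$ inside $W_\ell$; thus both pieces of $\Xi_{k,\ell+1}$ coincide with the smooth $F_{\ell+1}$ in a full neighborhood of $\partial W_\ell$. Near $\partial Q_{\ell,i}^j$ both $\tilde\Phi_{\ell,i,j}$ and $\Psi_{\ell,i,j}$ are the identity, so the inside and outside pieces both equal the smooth translation $F''_\ell$. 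Measure preservation, the boundary identity $\Xi_{k,\ell+1}|_{\partial Q}=\id$ (since $\partial Q\cap W_\ell=\emptyset$), and the equality \eqref{Eq2} on $\bigcup_{j=\ell+1}^k\overline\Omega_j$ are then checked region by region.

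Finally, for the distance estimate \eqref{Eq3}, on $Q\setminus W_\ell$ the difference $|\Xi_{k,\ell+1}-F_{\ell+1}|$ reduces to $|\Xi_{k\ell}-F_\ell|$ and is bounded by $2^{-2k+\ell}$; on $W_\ell\setminus\bigcup Q_{\ell,i}^j$ it vanishes; and inside each cube it is controlled by the $d$-closeness of $\Psi_{\ell,i,j}$ to $\tilde\Phi_{\ell,i,j}$, which was chosen below $2^{-2k+\ell}$. The same split works for the inverses, using that $\Xi_{k\ell}$ and $F_\ell$ carry $\overline B_{\ell,i}$ to the same image, and $\Psi_{\ell,i,j}$ and $\tilde\Phi_{\ell,i,j}$ both map $Q_{\ell,i}^j$ to itself. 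Adding the two suprema yields $d(\Xi_{k,\ell+1},F_{\ell+1})<2\cdot 2^{-2k+\ell}=2^{-2k+\ell+1}$, closing the induction; taking $\Xi_k:=\Xi_{kk}$ then finishes the proof of Theorem~\ref{main}. The main obstacle is organizing this piecewise surgery so that smoothness, measure preservation, exact coincidence with $F_{\ell+1}$ on the growing union of $\overline\Omega_{j'}$'s, and the geometric distance bound all hold simultaneously across the many cubes $Q_{\ell,i}^j$; everything else reduces to bookkeeping once one notes that Corollary~\ref{000} is scale invariant and permits prescribing coincidence on any compact set in the complement of the relevant Cantor set.
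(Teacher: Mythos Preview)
Your proposal is correct and follows essentially the same route as the paper: induction on $\ell$ with the base case via Corollary~\ref{000}, and the inductive step by keeping $\Xi_{k\ell}$ outside $W_\ell$, taking $F_{\ell+1}$ on $W_\ell\setminus\bigcup_{i,j}Q_{\ell i}^j$, and replacing the rescaled copies of $\Phi$ inside each cube $Q_{\ell i}^j$ by their diffeomorphic approximations from Corollary~\ref{000}, chosen to agree with $F_{\ell+1}$ on $(\overline\Omega_{\ell+1}\cup\dots\cup\overline\Omega_k)\cap Q_{\ell i}^j$. The only cosmetic difference is in the bookkeeping of the distance estimate: the paper chooses the per-cube approximation within $2^{-2k+\ell-1}$ and uses the inequality $\sup_X|f|+\sup_X|g|\le(\sup_A|f|+\sup_B|g|)+(\sup_{X\setminus A}|f|+\sup_{X\setminus B}|g|)$, whereas you bound $\sup_Q|\Xi_{k,\ell+1}-F_{\ell+1}|$ and $\sup_Q|\Xi_{k,\ell+1}^{-1}-F_{\ell+1}^{-1}|$ separately by $2^{-2k+\ell}$ and then add; both organizations yield $d(\Xi_{k,\ell+1},F_{\ell+1})<2^{-2k+\ell+1}$.
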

\begin{remark}
Let us comment on condition \eqref{Eq2}. We require $\Xi_{k\ell}$ to be a diffeomorphism, but $F_\ell$ is only a homeomorphism.
There is, however, no contradiction here, because $F_\ell$ is a diffeomorphism outside $C_\ell$, so, in particular,
it is a diffeomorphism in $\overline{\Omega}_{\ell}\cup\ldots\cup\overline{\Omega}_k$. Indeed, for $j\geq \ell$,
$\overline{\Omega}_j\subset Q\setminus C_j\subset Q\setminus C_{\ell}$, so
$\overline{\Omega}_{\ell}\cup\ldots\cup\overline{\Omega}_k\subset Q\setminus C_{\ell}$.
\end{remark}
\begin{proof}
Recall that $F_1=\Phi$ is a homeomorphism from Lemma~\ref{base plus}. Let
$\Xi_{k1}=\tilde{\Phi}$ be a diffeomorphism from Corollary~\ref{000} such that
$$
\Xi_{k1} = \Phi = F_1
\quad
\text{on}
\quad
\overline{\Omega}_1\cup\ldots\cup\overline{\Omega}_k
$$
and
$$
d(\Xi_{k1},F_1)= d(\tilde{\Phi},\Phi)<2^{-2k+1}.
$$
We used here the fact that
$$
\overline{\Omega}_1\cup\ldots\cup\overline{\Omega}_k\subset Q\setminus C_1=Q\setminus A.
$$
Now suppose that we already constructed a measure preserving diffeomorphism $\Xi_{k\ell}$ for
some $1\leq\ell\leq k-1$ that has properties
\eqref{Eq1}, \eqref{Eq2} and \eqref{Eq3}.

We need to construct a measure preserving diffeomorphism $\Xi_{k,\ell+1}$ that satisfies
\begin{equation}
\label{Eq4}
\Xi_{k,\ell+1}=\id
\quad
\text{on}
\quad
\partial Q,
\end{equation}
\begin{equation}
\label{Eq5}
\Xi_{k,\ell+1}=F_{\ell+1}
\quad
\text{on}
\quad
\overline{\Omega}_{\ell+1}\cup\ldots\cup\overline{\Omega}_k,
\end{equation}
\begin{equation}
\label{Eq6}
d(\Xi_{k,\ell+1},F_{\ell+1})<2^{-2k+(\ell+1)}.
\end{equation}
The construction of $\Xi_{k,\ell+1}$ is described below.

Recall that $F_{\ell+1}$ is obtained from $F_{\ell}$ by a modification of $F_{\ell}$
on the set
\begin{equation}
\label{Eq6.5}
W_{\ell}=\bigcup_{i=1}^{N_{\ell}} \overline{B}_{i\ell} \subset\Omega_{\ell}.
\end{equation}
Hence
\begin{equation}
\label{Eq7}
F_{\ell+1}=F_{\ell}
\quad
\text{on}
\quad
Q\setminus W_{\ell}.
\end{equation}
Since both $F_{\ell}$ and $F_{\ell+1}$ are homeomorphisms of $Q$, \eqref{Eq7} implies that
\begin{equation}
\label{Eq8}
F_{\ell+1}(W_{\ell}) = F_{\ell}(W_{\ell}).
\end{equation}
We define
\begin{equation}
\label{Eq9}
\Xi_{k,\ell+1} = \Xi_{k\ell}
\quad
\text{on}
\quad
Q\setminus W_{\ell}
\end{equation}
and we still need to define $\Xi_{k,\ell+1}$ on $W_{\ell}$.

We claim that it suffices to define $\Xi_{k,\ell+1}$ in such a way that
\begin{enumerate}
\item[(a)] $\Xi_{k,\ell+1}$ is a measure preserving diffeomorphism of $W_{\ell}$ onto
$F_{\ell}(W_{\ell})$ that agrees with $F_{\ell}$ in $W_{\ell}$ near the boundary of $W_{\ell}$,\\
\item[(b)] $\Xi_{k,\ell+1} = F_{\ell+1}$ on
$(\overline{\Omega}_{\ell+1}\cup\ldots\cup\overline{\Omega}_k)\cap W_{\ell}$,\\
\item[(c)]
$\displaystyle
\sup_{x\in W_{\ell}} \big|\Xi_{k,\ell+1}(x) - F_{\ell+1}(x)\big| +
\sup_{x\in F_{\ell+1}(W_{\ell})} \big|\Xi_{k,\ell+1}^{-1}(x) - F_{\ell+1}^{-1}(x)\big| < 2^{-2k+\ell}.
$
\end{enumerate}
Before we proceed with the construction, we will show that properties (a), (b) and (c) imply
\eqref{Eq4}, \eqref{Eq5} and \eqref{Eq6}.

By the induction hypothesis, $\Xi_{k\ell}$ satisfies \eqref{Eq2} so
$F_{\ell} = \Xi_{k\ell}$ in $W_{\ell}\subset\Omega_{\ell}$ (see \eqref{Eq6.5}).
Thus (a) implies that $\Xi_{k,\ell+1}$ is a measure preserving diffeomorphism in
$W_{\ell}$ that maps $W_{\ell}$ onto $\Xi_{k\ell}(W_{\ell})$ and
$\Xi_{k,\ell+1}$ agrees with $\Xi_{k\ell}$ in $W_{\ell}$ near the boundary of $W_{\ell}$.
Since also $\Xi_{k,\ell+1}=\Xi_{k\ell}$ in $Q\setminus W_{\ell}$ (see \eqref{Eq9}),
it follows that $\Xi_{k,\ell+1}$ is a measure preserving diffeomorphism of $Q$ which is
identity on $\partial Q$. This proves \eqref{Eq4}.

Observe that \eqref{Eq2}, \eqref{Eq7} and \eqref{Eq9} imply
$$
\Xi_{k,\ell+1} = \Xi_{k\ell} = F_{\ell}= F_{\ell+1}
\quad
\text{on}
\quad
(Q\setminus W_{\ell})\cap (\overline{\Omega}_{\ell}\cup\ldots\cup\overline{\Omega}_k).
$$
Since
$$
(\overline{\Omega}_{\ell+1}\cup\ldots\cup\overline{\Omega}_k)\setminus W_{\ell} \subset
(Q\setminus W_{\ell}) \cap (\overline{\Omega}_{\ell}\cup\ldots\cup\overline{\Omega}_k)
$$
we have that
$$
\Xi_{k,\ell+1} = F_{\ell+1}
\quad
\text{on}
\quad
(\overline{\Omega}_{\ell+1}\cup\ldots\cup\overline{\Omega}_k)\setminus W_{\ell}.
$$
This, along with the property (b), implies \eqref{Eq5}.

It follows from (a) and \eqref{Eq8} that
$\Xi_{k,\ell+1}(W_{\ell}) = F_{\ell+1}(W_{\ell})$. This yields
$$
\Xi_{k,\ell+1}(Q\setminus W_{\ell}) = F_{\ell+1}(Q\setminus W_{\ell}) = Q\setminus F_{\ell+1}(W_{\ell}).
$$
Now \eqref{Eq7} and \eqref{Eq9} imply that
$$
\Xi_{k,\ell+1}=\Xi_{k\ell}
\quad
\text{and}
\quad
F_{\ell+1}=F_{\ell}
\qquad
\text{in $Q\setminus W_{\ell}$.}
$$
Hence
$$
\Xi_{k,\ell+1}^{-1}=\Xi_{k\ell}^{-1}
\quad
\text{and}
\quad
F_{\ell+1}^{-1}=F_{\ell}^{-1}
\qquad
\text{in $Q\setminus F_{\ell+1}(W_{\ell})$.}
$$
Thus
\begin{eqnarray*}
\lefteqn{\sup_{x\in Q\setminus W_{\ell}} \big|\Xi_{k,\ell+1}(x) - F_{\ell+1}(x)\big| +
\sup_{x\in Q\setminus F_{\ell+1}(W_{\ell})} \big|\Xi_{k,\ell+1}^{-1}(x) - F_{\ell+1}^{-1}(x)\big|} \\
& = &
\sup_{x\in Q\setminus W_{\ell}} \big|\Xi_{k,\ell}(x) - F_{\ell}(x)\big| +
\sup_{x\in Q\setminus F_{\ell+1}(W_{\ell})} \big|\Xi_{k,\ell}^{-1}(x) - F_{\ell}^{-1}(x)\big| \\
& \leq &
d(\Xi_{k\ell},F_{\ell}) < 2^{-2k+\ell}
\end{eqnarray*}
by the induction hypothesis on $\Xi_{k\ell}$.

The above estimate along with (c) yields \eqref{Eq6} because of the following elementary inequality
$$
\sup_X|f|+\sup_X |g| \leq \Big(\sup_A |f|+\sup_B |g|\Big) + \Big(\sup_{X\setminus A} |f|+\sup_{X\setminus B}|g|\Big).
$$

We proved that (a), (b) and (c) imply \eqref{Eq4}, \eqref{Eq5} and \eqref{Eq6} so it remains to define
$\Xi_{k,\ell+1}$ in $W_{\ell}$ in a way that it will have properties (a), (b) and (c).

According to (a) we have to construct $\Xi_{k,\ell+1}$ in $W_{\ell}$ in a way that it will have the same image as $F_{\ell}$.
However, because of property (b), the diffeomorphism $\Xi_{k,\ell+1}$ will have to agree with $F_{\ell+1}$ on some subset of
$W_{\ell}$, so the construction of $\Xi_{k,\ell+1}$ will involve that of $F_{\ell+1}$.

Recall that $F_{\ell+1}$ is obtained from $F_{\ell}$ by a modification of $F_{\ell}$ on the set
$$
W_{\ell} = \bigcup_{i=1}^{N_{\ell}} \overline{B}_{\ell i} \subset\Omega_{\ell}
$$
and $F_{\ell+1}=F_{\ell}$ near the boundary of $W_{\ell}$, see \eqref{bimbala}.

In each ball we have a finite family of pairwise disjoint cubes $\mathcal{Q}_{\ell i} = \{ Q_{\ell i}^j\}_j$ such that
$F_{\ell+1}$ restricted to each of the cubes $Q_{\ell i}^j$ is a translation followed by a rescaled version of the homeomorphism
$\Phi$ from Lemma~\ref{base plus}. To emphasize this observation we will write
\begin{equation}
\label{Eq10}
F_{\ell+1}|_{Q_{\ell i}^j} = \Phi_{\ell i}^j.
\end{equation}
In each cube $Q_{\ell i}^j$ there is a Cantor set $A_{\ell i}^j$ which is translated and reflected by
$F_{\ell+1}|_{Q_{\ell i}^j} = \Phi_{\ell i}^j$.

Clearly, Corollary~\ref{000} applies to each of the mappings $\Phi_{\ell i}^j$ with $A_{\ell i}^j$
playing a role of $A$. Hence there are measure preserving diffeomorphisms
$$
\tilde{\Phi}_{\ell i}^j : Q_{\ell i}^j\to \Phi_{\ell i}^j (Q_{\ell i}^j)
$$
such that
\begin{equation}
\label{Eq11}
d(\tilde{\Phi}_{\ell i}^j,\Phi_{\ell i}^j)< 2^{-2k+\ell-1},
\end{equation}
$$
\tilde{\Phi}_{\ell i}^j=\Phi_{\ell i}^j
\quad
\text{near the boundary of $\partial Q_{\ell i}^j$ (i.e. $\tilde{\Phi}_{\ell i}^j$
is a translation near $\partial Q_{\ell i}^j$)},
$$
\begin{equation}
\label{Eq12}
\tilde{\Phi}_{\ell i}^j=\Phi_{\ell i}^j
\quad
\text{on}
\quad
(\overline{\Omega}_{\ell+1}\cup\ldots\cup\overline{\Omega}_k)\cap Q_{\ell i}^j.
\end{equation}

To see that the last condition can be guaranteed observe that, for $m=\ell+1,\ldots,k$,
$$
\overline{\Omega}_m\subset Q\setminus C_m\subset Q\setminus C_{\ell+1}\subset Q\setminus A_{\ell i}^j,
\quad
\text{since}
\quad
A_{\ell i}^j\subset\bigcup_{ij} A_{\ell i}^j=E_{\ell+1}\subset C_{\ell +1}.
$$
Thus
$$
\overline{\Omega}_{\ell+1}\cup\ldots\cup\overline{\Omega}_k\subset Q\setminus A_{\ell i}^j,
\quad
\text{so}
\quad
(\overline{\Omega}_{\ell+1}\cup\ldots\cup\overline{\Omega}_k)\cap Q_{\ell i}^j\subset Q_{\ell i}^j\setminus A_{\ell i}^j
$$
and hence Corollary~\ref{000} applies with the set
$(\overline{\Omega}_{\ell+1}\cup\ldots\cup\overline{\Omega}_k)\cap Q_{\ell i}^j$
playing the role of $K\subset Q\setminus A$. Therefore we can have $\tilde{\Phi}_{\ell i}^j$ satisfying \eqref{Eq12}.

Now we define $\Xi_{k,\ell+1}$ in $W_\ell$ as follows.
\begin{equation}
\label{Eq14.15}
\Xi_{k,\ell+1}(x)=
\left\{
\begin{array}{ccc}
\tilde{\Phi}_{\ell i}^j(x)    & \text{if} & \text{$x\in Q_{\ell i}^j$ for some $i,j$,}\\
F_{\ell+1}(x) &   \text{if}   & x\in W_{\ell}\setminus\bigcup_{ij} Q_{\ell i}^j.
\end{array}
\right.
\end{equation}
Observe that $F_{\ell+1}$ is a measure preserving diffeomorphism in $W_{\ell}\setminus \bigcup_{ij} Q_{\ell i}^j$.

It remains to prove that $\Xi_{k,\ell+1}$ has the properties (a), (b) and (c).

Note that
$$
\Xi_{k,\ell+1} = \tilde{\Phi}_{\ell i}^j
\quad
\text{on}
\quad
Q_{\ell i}^j
$$
and
$$
\tilde{\Phi}_{\ell i}^j = \Phi_{\ell i}^j = F_{\ell +1}
\quad
\text{near the boundary of $Q_{\ell i}^j$},
$$
by \eqref{Eq10} and Corollary~\ref{000}. This and the second line in \eqref{Eq14.15} imply that
$\Xi_{k,\ell+1}$ is a diffeomorphism in $W_{\ell}$. It is measure preserving because
$F_{\ell+1}$ and $\tilde{\Phi}_{\ell i}^j$ are measure preserving.

Since $\Xi_{k,\ell+1}=F_{\ell+1}=F_{\ell}$ near the boundary of $W_{\ell}$ (see \eqref{bimbala}),
$\Xi_{k,\ell+1}(W_{\ell})=F_{\ell}(W_{\ell})$. This proves the property (a).

Note that by \eqref{Eq10}, \eqref{Eq12} and the first line in \eqref{Eq14.15},
$$
\Xi_{k,\ell+1} = \tilde{\Phi}_{\ell i}^j = \Phi_{\ell i}^j = F_{\ell+1}
\quad
\text{on}
\quad
(\overline{\Omega}_{\ell+1}\cup\ldots\cup\overline{\Omega}_k)\cap Q_{\ell i}^j,
$$
so
$$
\Xi_{k,\ell+1} = F_{\ell+1}
\quad
\text{on}
\quad
(\overline{\Omega}_{\ell+1}\cup\ldots\cup\overline{\Omega}_k)\cap \Big(\bigcup_{ij}Q_{\ell i}^j\Big).
$$
Since also (see \eqref{Eq14.15})
\begin{equation}
\label{Eq16}
\Xi_{k,\ell+1} = F_{\ell+1}
\quad
\text{on}
\quad
W_{\ell}\setminus\bigcup_{ij} Q_{\ell i}^j,
\end{equation}
we conclude that
$$
\Xi_{k,\ell+1} = F_{\ell+1}
\quad
\text{on}
\quad
(\overline{\Omega}_{\ell+1}\cup\ldots\cup\overline{\Omega}_k)\cap W_{\ell},
$$
which is property (b).

Finally, \eqref{Eq16} shows that in order to prove (c) it suffices to show that
\begin{equation}
\label{Eq17}
\sup_{x\in\bigcup_{ij} Q_{\ell i}^j} \big|\Xi_{k,\ell+1}(x) - F_{\ell+1}(x)\big| +
\sup_{x\in\bigcup_{ij} F_{\ell+1}(Q_{\ell i}^j)} \big|\Xi_{k,\ell+1}^{-1}(x) - F_{\ell+1}^{-1}(x)\big| <
2^{-2k+\ell}.
\end{equation}
Observe that by \eqref{Eq10}, \eqref{Eq11} and the first line in \eqref{Eq14.15},
$$
\sup_{x\in Q_{\ell i}^j} \big|\Xi_{k,\ell+1}(x) - F_{\ell+1}(x)\big| +
\sup_{x\in F_{\ell+1}(Q_{\ell i}^j)} \big|\Xi_{k,\ell+1}^{-1}(x) - F_{\ell+1}^{-1}(x)\big| =
d(\tilde{\Phi}_{\ell i}^j,\Phi_{\ell i}^j) < 2^{-2k+\ell-1}.
$$
Hence \eqref{Eq17} follows from the elementary inequality
$$
\sup_{\bigcup_i A_i} |f| + \sup_{\bigcup B_i} |g| \leq
2\sup_i \Big(\sup_{A_i} |f|+\sup_{B_i} |g|\Big).
$$
The proof is complete.
\end{proof}

\section{Appendix}

The classical definition of an approximately differentiable function that can be found in most of the books
(see e.g. \cite{Whitney}) is provided below.
The aim of this Appendix is to show that this definition is equivalent with the one we used in the Introduction.
This is a folklore result, but we could not find a good reference for it. A similar result for approximate continuity on a real line can be found in \cite[Theorem 6.6]{LMZ}.

\begin{definition}[Classical definition]
Let $f:E\to\bbbr$ be a measurable function defined on a measurable set $E\subset\bbbr^n$. We say that
$f$ is {\em approximately differentiable} at $x\in E$ if there is a linear function $L:\bbbr^n\to\bbbr$
such that for any $\eps>0$ the set
\begin{equation}
\label{WT1-eq2}
\left\{ y\in E:\, \frac{|f(y)-f(x)-L(y-x)|}{|y-x|} <\eps \right\}
\end{equation}
has $x$ as a density point.
\end{definition}

\begin{proposition}
\label{WT1-T4}
A measurable function $f:E\to\bbbr$ defined in a measurable set $E\subset\bbbr^n$ is approximately
differentiable at $x\in E$ if and only if there is a measurable set $E_x\subset E$
and a linear function $L:\bbbr^n\to\bbbr$ such that $x$ is a density point of $E_x$ and
\begin{equation}
\label{WT1-eq3}
\lim_{E_x\ni y\to x} \frac{|f(y)-f(x)-L(y-x)|}{|y-x|} = 0.
\end{equation}
\end{proposition}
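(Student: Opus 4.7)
The proof splits into two implications, of which only one requires real work.

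The easy direction is that the set-based definition implies the classical one. Suppose $E_x\subset E$ has $x$ as a density point and
$\bigl|f(y)-f(x)-L(y-x)\bigr|/|y-x|\to 0$ as $E_x\ni y\to x$. Fix $\eps>0$. By definition of the limit there is $\rho>0$ such that $E_x\cap B(x,\rho)\setminus\{x\}$ is contained in the set \eqref{WT1-eq2}. For every $r<\rho$ this yields
$$|\{y\in E:|f(y)-f(x)-L(y-x)|<\eps|y-x|\}\cap B(x,r)|\geq |E_x\cap B(x,r)|,$$
and dividing by $|B(x,r)|$ and letting $r\to 0$ shows that $x$ is a density point of the set in \eqref{WT1-eq2}.

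For the converse I will use a diagonal construction. Write $A_k$ for the set \eqref{WT1-eq2} corresponding to $\eps=1/k$; by hypothesis, $x$ is a density point of each $A_k$. I can therefore choose strictly decreasing radii $\rho_1>\rho_2>\cdots\to 0$ with $\rho_{k+1}\leq\rho_k/2$ and
$$|A_k\cap B(x,r)|\geq (1-2^{-k})|B(x,r)|\qquad\text{for every }r\leq\rho_k.$$
Define
$$E_x=\{x\}\cup\bigcup_{k=1}^\infty A_k\cap\bigl(B(x,\rho_k)\setminus B(x,\rho_{k+1})\bigr).$$
If $y\in E_x\setminus\{x\}$ and $|y-x|\in[\rho_{k+1},\rho_k)$, then $y\in A_k$, so $|f(y)-f(x)-L(y-x)|/|y-x|<1/k$. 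Since $|y-x|\to 0$ forces $k\to\infty$, the limit \eqref{WT1-eq3} holds along $E_x$.

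It remains to check that $x$ is a density point of $E_x$, which is where the diagonal must be tuned correctly. Setting $B_k=B(x,\rho_k)$, for any $r\in[\rho_{j+1},\rho_j)$ the complement $B(x,r)\setminus E_x$ is covered by the ``bad'' parts of the shells of index $\geq j$, so
$$|B(x,r)\setminus E_x|\leq\sum_{k\geq j}\bigl|(B_k\setminus B_{k+1})\setminus A_k\bigr|\leq \sum_{k\geq j}2^{-k}|B_k|.$$
The bound $\rho_{k+1}\leq\rho_k/2$ gives $|B_k|\leq 2^{-n(k-j)}|B_j|$, hence the sum is bounded by a constant times $2^{-j}|B_j|$. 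Since $|B(x,r)|\geq|B_{j+1}|=2^{-n}|B_j|$, we conclude
$$\frac{|B(x,r)\setminus E_x|}{|B(x,r)|}\leq C\,2^{-j}\xrightarrow{r\to 0}0,$$
so $x$ is a density point of $E_x$. The only delicate point is the choice of $\rho_k$: it must both be small enough that $A_k$ is $(1-2^{-k})$-dense inside $B_k$ and contract geometrically fast enough that the shell contributions telescope; any slower decay would cause the series $\sum 2^{-k}|B_k|/|B(x,r)|$ to fail to vanish.
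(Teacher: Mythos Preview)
Your overall approach---building $E_x$ as the union of the sets $A_k$ restricted to successive annuli $B(x,\rho_k)\setminus B(x,\rho_{k+1})$---is exactly the construction the paper uses. However, there is a genuine gap in your density-point estimate. You write $|B(x,r)|\geq|B_{j+1}|=2^{-n}|B_j|$, but the equality is unjustified: from $\rho_{j+1}\leq\rho_j/2$ you only get $|B_{j+1}|\leq 2^{-n}|B_j|$, and you have no lower bound on $|B_{j+1}|/|B_j|$, since the density requirement for $A_{j+1}$ may force $\rho_{j+1}$ to be arbitrarily small compared to $\rho_j/2$. Thus when $r$ is close to $\rho_{j+1}$ your upper bound $C\,2^{-j}|B_j|$ on the bad set cannot be controlled by $|B(x,r)|$, and the displayed conclusion does not follow.

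The repair is immediate once you treat the outermost shell separately. Since $r\leq\rho_j$, the density hypothesis applied at radius $r$ gives $|B(x,r)\setminus A_j|\leq 2^{-j}|B(x,r)|$, which bounds the contribution of the $j$-th shell directly in terms of $|B(x,r)|$. For $k\geq j+1$ your geometric decay yields $|B_k|\leq 2^{-n(k-j-1)}|B_{j+1}|\leq 2^{-n(k-j-1)}|B(x,r)|$, and the tail sums to at most $C\,2^{-j}|B(x,r)|$. The paper sidesteps this altogether by imposing the faster decay $r_{k+1}\leq r_k/2^{k/n}$, so that $|B(x,r_{k+2})|$ is already negligible compared to $|B(x,r)|$ and the estimate can be closed using only the two shells of index $k$ and $k+1$.
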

\begin{proof}
The implication from right to left is obvious: the set \eqref{WT1-eq2} contains
\mbox{$E_x\cap B(x,r)$} for some small $r$ and clearly $x$ is a density point of this set.
To prove the opposite implication we need to define the set $E_x$. Let $r_k$ be a
sequence strictly decreasing to $0$ such that
$$
r_{k+1}\leq \frac{r_k}{2^{k/n}}
$$
and
\begin{equation}
\label{WT1-eq4}
\left|\left\{ y\in B(x,r)\cap E:\, \frac{|f(y)-f(x)-L(y-x)|}{|y-x|}<\frac{1}{k}\right\}\right|
\geq \omega_n r^n\left(1-\frac{1}{2^k}\right)
\end{equation}
whenever $0<r\leq r_k$. Here $\omega_n$ stands for the volume of the unit ball in $\bbbr^n$. Let
$$
E_k=
\left\{ y\in B(x,r_k)\cap E:\, \frac{|f(y)-f(x)-L(y-x)|}{|y-x|}<\frac{1}{k}\right\}\, .
$$
It follows from \eqref{WT1-eq4} that
$$
|E_k|\geq \omega_n r_k^{n}\left(1-\frac{1}{2^k}\right).
$$
Finally, we define
$$
E_x=\bigcup_{k=1}^\infty (E_k\setminus B(x,r_{k+1})).
$$
The set $E_x$ is the union of the parts of the sets $E_k$ that are contained in the annuli
$B(x,r_k)\setminus B(x,r_{k+1})$. Clearly, the condition \eqref{WT1-eq3}
is satisfied and we only need to prove that $x$ is a density point of $E_x$.
If $r$ is small, then $r_{k+1}<r\leq r_k$ for some large $k$ and we need to show that
\begin{equation}
\label{WT1-eq5}
\frac{|B(x,r)\cap E_x|}{\omega_n r^n}\to 1
\quad
\mbox{as $k\to \infty$.}
\end{equation}
We have
\begin{eqnarray*}
\lefteqn{|B(x,r)\cap E_x|
 \geq
|(B(x,r)\cap E_k)\setminus B(x,r_{k+1})| + |E_{k+1}\setminus B(x,r_{k+2})|} \\
& \geq &
\left(\omega_n r^n\left(1-\frac{1}{2^k}\right) -\omega_n r_{k+1}^n\right)
+
\left(\omega_n r_{k+1}^n\left(1-\frac{1}{2^{k+1}}\right)-\omega_n r_{k+2}^n\right) \\
& = &
\omega_n r^n\left(1-\frac{1}{2^k}\right) - \frac{\omega_n r_{k+1}^n}{2^{k+1}}
-
\omega_n r_{k+2}^n
 >
\omega_n r^n\left(1-\frac{1}{2^{k-1}}\right)\,
\end{eqnarray*}
because
$$
r_{k+2}^n\leq \frac{r_{k+1}^n}{2^{k+1}}
\qquad
\mbox{and}
\qquad
r_{k+1}<r.
$$
Now \eqref{WT1-eq5} follows easily.
\end{proof}


\begin{thebibliography}{99}
%
\bibitem{Alberti}
{\sc Alberti, G.:}
A Lusin type theorem for gradients.
{\em J. Funct.\ Anal.} 100 (1991), 110--118.
%
\bibitem{AlpernP}
{\sc Alpern, S., Prasad, V. S.:}
{\em Typical dynamics of volume preserving homeomorphisms.}
Cambridge Tracts in Mathematics, 139. Cambridge University Press, Cambridge, 2000.
%
\bibitem{Ball}
{\sc Ball, J. M.:}
Convexity conditions and existence theorems in nonlinear elasticity.
{\em Arch.\ Rational Mech.\ Anal.} 63 (1976/77), 337--403.
%
\bibitem{Banyaga}
{\sc Banyaga, A.:}
Formes-volume sur les vari\'et\'es \`a bord.
{\em Enseignement Math. (2)}, 20 (1974), 127--131.
%
\bibitem{Cerny}
{\sc  \v{C}ern\'{y}, R.:}
Bi-Sobolev homeomorphism with zero minors almost everywhere.
{\em Adv.\ Calc.\ Var.} 8 (2015), 1--30.
%
\bibitem{CsatoDacorognaKneuss}
{\sc Csat{\'o}, G., Dacorogna, B., Kneuss, O.:}
{\em The pullback equation for differential forms}.
Progress in Nonlinear Differential Equations and their Applications,
83. Birkh\"auser/Springer, New York, 2012.
%
\bibitem{DacorognaDirect}
{\sc Dacorogna, B.:}
{\em Direct Methods in the Calculus of Variations}.
Applied Mathematical Sciences Vol.~78,
Springer-Verlag, Berlin 1989.
%
\bibitem{DacorognaMoser}
{\sc Dacorogna, B., Moser, J.:}
On a partial differential equation involving the {J}acobian determinant.
{\em Ann. Inst. H. Poincar\'e Anal. Non Lin\'eaire}, 7 (1990), 1--26.
%
\bibitem{DHS}
{\sc D'Onofrio, L., Hencl, S., Schiattarella, R.:}
Bi-Sobolev homeomorphism with zero Jacobian almost everywhere.
{\em Calc.\ Var.\ Partial Differential Equations} 51 (2014), 139--170.
%
\bibitem{Federer44}
{\sc Federer, H.:}
Surface area. II.
{\em Trans.\ Amer.\ Math.\ Soc.} 55 (1944), 438--456.
%
\bibitem{FedererBook}
{\sc Federer, H.:}
{\em Geometric measure theory.}
Die Grundlehren der mathematischen Wissenschaften, Band 153, Springer-Verlag New York Inc., New York 1969.
%
\bibitem{FonsecaG}
{\sc Fonsesca, I., Gangbo, W.:}
{\em Degree theory in analysis and applications.}
Oxford Lecture Series in Mathematics and its Applications, 2.
Oxford Science Publications. The Clarendon Press, Oxford University Press, New York, 1995.
%
\bibitem{GH}
{\sc Goldstein, P., Haj\l{}asz, P.:}
Modulus of continuity of orientation preserving approximately differentiable
homeomorphisms with a.e. negative Jacobian. (In preparation.)
%
\bibitem{HajlaszChange}
{\sc Haj\l{}asz, P.:}
Change of variables formula under minimal assumptions.
{\em Colloq.\ Math.} 64 (1993), 93--101.
%
\bibitem{HajlaszKoskela}
{\sc Haj\l{}asz, P., Koskela, P.:}
Formation of cracks under deformations with finite energy.
{\em Calc.\ Var.\ Partial Differential Equations} 19 (2004), 221--227.
%
\bibitem{HajlaszMaly}
{\sc Haj\l{}asz, P., Mal\'y, J.:}
Approximation in Sobolev spaces of nonlinear expressions involving the gradient.
{\em Ark.\ Mat.} 40 (2002), 245--274.
%
\bibitem{HajlaszMirra}
{\sc Haj\l{}asz, P., Mirra, J.:}
The Lusin theorem and horizontal graphs in the Heisenberg group.
{\em Anal.\ Geom.\ Metr.\ Spaces} 1 (2013), 295--301.
%
\bibitem{Hencl}
{\sc Hencl, S.:}
Sobolev homeomorphism with zero Jacobian almost everywhere.
{\em J. Math.\ Pures Appl.} (9) 95 (2011), 444--458.
%
\bibitem{HenclK}
{\sc Hencl, S., Koskela, P.:}
{\em Lectures on mappings of finite distortion.}
Lecture Notes in Mathematics, 2096. Springer, Cham, 2014.
%
\bibitem{HenclM}
{\sc Hencl, S., Mal\'y, J.:}
Jacobians of Sobolev homeomorphisms.
{\em Calc.\ Var.\ Partial Differential Equations} 38 (2010), 233--242.
%
\bibitem{HenclP}
{\sc Hencl, S., Pratelli, A.:}
Diffeomorphic Approximation of $W^{1,1}$ planar Sobolev homeomorphisms.
Preprint 2015  (arXiv:1502.07253).
%
\bibitem{HenclV}
{\sc Hencl, S., Vejnar, B.:}
Sobolev homeomorphism that cannot be approximated by diffeomorphisms in $W^{1,1}$.
{\em Arch.\ Rational Mech.\ Anal.} Published electronically.
DOI 10.1007/s00205-015-0895-5 (2015).
%
\bibitem{iwaniecko}
{\sc Iwaniec, T, Kovalev, L., Onninen, J.:}
Diffeomorphic approximation of Sobolev homeomorphisms.
{\em Arch.\ Ration.\ Mech.\ Anal.} 201 (2011), 1047--1067.
%
\bibitem{LiuM}
{\sc Liu, Z., Mal\'y, J.:}
A strictly convex Sobolev function with null Hessian minors. Preprint 2015 (MATH-kma-2015/499).
%
\bibitem{LMZ}
{\sc Luke{\v{s}}, J., Mal{\'y}, J.,
              Zaj{\'{\i}}{\v{c}}ek, L.:}
{\em Fine topology methods in real analysis and potential theory.}
Lecture Notes in Mathematics, Springer-Verlag, 1986.
%
\bibitem{Moser}
{\sc Moser, J.:}
On the volume elements on a manifold.
{\em Trans. Amer. Math. Soc.}, 120 (1965), 286--294.
%
\bibitem{MoonensP}
{\sc Moonens, L., Pfeffer, W. F.:}
The multidimensional Luzin theorem.
{\em J. Math.\ Anal.\ Appl.} 339 (2008), 746--752.
%
\bibitem{MullerQY}
{\sc M\"uller, S., Qi, T., Yan, B. S.:}
On a new class of elastic deformations not allowing for cavitation.
{\em Ann.\ Inst.\ H. Poincar\'e Anal. Non Lin\'eaire} 11 (1994), 217--243.
%
\bibitem{MullerS}
{\sc M\"uller, S., Spector, S. J.:}
An existence theory for nonlinear elasticity that allows for cavitation.
{\em Arch.\ Rational Mech.\ Anal.} 131 (1995), 1--66.
%
\bibitem{MuellerSverak}
{\sc M\"uller, S., \v{S}ver\'ak, V.:}
Convex integration with constraints and applications to phase transitions and partial differential equations.
{\em J. Eur. Math. Soc.} 1 (1999), 393--422.
%
\bibitem{Onninen}
{\sc Onninen, J.:}
Differentiability of monotone Sobolev functions.
{\em Real Anal.\ Exchange} 26 (2000/01), 761--772.
%
\bibitem{Pilyugin}
{\sc Pilyugin, S.:}
{\em Spaces of dynamical systems. Vol. 3.}
Walter de Gruyter, 2012.
%
\bibitem{ReichelderferRado}
{\sc Rado, T., Reichelderfer, P.V.:}
{\em Continuous transformations in analysis, with an introduction to algebraic topology.}
Springer-Verlag, 1955.
%
\bibitem{Sverak}
{\sc \v{S}ver\'{a}k, V.:}
Regularity properties of deformations with finite energy.
{\em Arch.\ Rational Mech.\ Anal.} 100 (1988), 105--127.
%
\bibitem{Whitney}
{\sc Whitney, H.:}
On totally differentiable and smooth functions.
{\em Pacific J. Math.} 1 (1951), 143--159.

\end{thebibliography}
\end{document}